\newtheorem{tma}{Theorem}[section]
\newtheorem{prop}{Proposition}
\newtheorem{lema}{Lemma}
\def\grad{\mathop{\rm grad}\nolimits}
\def\Hess{\mathop{\rm Hess}\nolimits}
\def\Ric{\mathop{\rm Ric}\nolimits}
\title{An asymptotically cusped three dimensional expanding gradient Ricci soliton}
\author[Daniel Ramos]{Daniel Ramos \\ \\ \tiny{January 10, 2013}}
\begin{document}

\begin{abstract}
We construct an expanding gradient Ricci soliton in dimension three over the topological manifold $\mathbb R \times \mathbb T^2  $ that
aproaches asymptotically a constant curvature cusp at one end, and a flat manifold on the other end.
We prove that this is
the only gradient soliton with this topology, provided the curvature is negatively pinched, $-1/4 < sec < 0$, at the
time-zero manifold
(normalizing
the
soliton to be born at time $-1$).
\end{abstract}

\thanks{Universitat Autònoma de Barcelona, Departament de Matemàtiques - 80193 Bellaterra, Barcelona (Spain)- \\ E-mail:
\texttt{dramos@mat.uab.cat} . }

\maketitle

\section{Introduction}
A gradient Ricci soliton is a smooth riemannian metric $g$ on a manifold $M$ together with a potential function $f:M\rightarrow \mathbb R$
such that
\begin{equation}
\Ric + \Hess f +\frac{\epsilon}{2}g=0 \label{solitoneq} 
\end{equation}
for some $\epsilon\in \mathbb R$. Solitons provide special examples of self-similar solutions of Ricci flow, $\frac{\partial}{\partial t} g
= -2 \Ric$, evolving by homotheties and diffeomorphisms generated by the flow $\phi(\cdot,t)$ of the vector field $\grad f$, this is
$g(t)=(\epsilon t +1)\phi^*_t g_0$. The constant $\epsilon$ can be normalized to be $-1,0,1$ according the soliton being shrinking, steady
or expandig respectively (see \cite{TRFTAA1} for a general reference). Solitons play an important role in the
classification of singular models for Ricci flow despite of (or actually due to) existing only a limited number of examples. In dimension
3, the only closed gradient solitons are those of constant curvature. Furthermore, by the results of Hamiton-Ivey \cite{Ham1}, \cite{Ivey}
and Perelman \cite{Per}, the only
three-dimensional open gradient shrinking
solitons with bounded curvature are $\mathbb S^3$, $\mathbb R^3$ and $\mathbb S^2 \times \mathbb R$ with their standard metrics, and their
quotients. Notice that in all these examples the gradient vector field is null. Examples with nontrivial
potential function in dimension three include the Gaussian flat soliton, the steady Bryant soliton, the product of the
2-dimensional steady Cigar soliton with $\mathbb R$ due to Hamilton, and a continuous family of rotationally symmetric expanding gradient
solitons due to Bryant. In summary, shrinking and steady solitons are very few, and these are useful in the analysis of high curvature
regions of the Ricci flow. Expanding
solitons are less understood and there is much more variety of them.

\vspace{1em}
A couple of motivating examples are the following. The hyperbolic metric $g=dr^2 + e^{-2r}(dx^2+dy^2)$ on $\mathbb R^3$ together with a
trivial potential $f=cst$ fits into the soliton
equation (\ref{solitoneq}) with $\epsilon=1$, so any quotient (hyperbolic manifold) yields an expanding soliton. An open quotient of
the hyperbolic space, for instance the cusp $\mathbb R \times \mathbb T^2$ obtained as a quotient by parabolic isometries (represented by
euclidean translations on the $xy$-plane) yields also a soliton. 

We will restrict ourselves to bounded curvature metrics that yield uniformly bounded curvature flows. From the PDE point of view, this
condition ensures existence and uniqueness of solutions of the flow, both in the compact case (\cite{Ham2}, \cite{DeTur}) as well as on the
noncompact one (\cite{Shi}, \cite{ChenZhu}). If the uniformly boundedness condition of the curvature is dropped, we can loose the
uniqueness;
for instance approximating a cusp by high-curvature capped ends (cf. with \cite{Top1} and \cite{Ram} in the 2-dimensional case).
Nevertheless, even with this assumption, it is not clear what an open expanding hyperbolic manifold is at the birth time, namely
$t\rightarrow -1$ when the evolution is $g(t)=(t+1)g_{\mathrm{hyp}}$. A sequence of shrinked negatively curved manifolds does not need
to have a limit in the Gromov-Hausdorff sense, since the curvature is not uniformly bounded below. A hyperbolic cusp on $\mathbb R \times
\mathbb T^2$ as an expanding soliton tends to a line while its curvature tends to $-\infty$, as $t\rightarrow -1$ the birth time. 

Another interesting phenomenon occurs in $\mathbb R^n$ endowed with the euclidean metric: it fits into the soliton equation together with a
potential function $f(p)=-\epsilon \frac{|p|^2}{4}$ for any $\epsilon\in\mathbb R$. The nonzero cases are the so called Gaussian solitons,
and even when the metric is constant in all cases (hence there is a unique solution with a given initial condition), there is more than one
soliton structure on it.

The aim of this paper is constructing a particular example of expanding gradient Ricci soliton on $\mathbb R \times
\mathbb T^2$, different from the constant curvature examples. Furthermore, we prove that it is the only possible nonhomogeneous soliton
on this manifold provided there is a lower sectional curvature bound equal to $-\frac{1}{4}$.

\vspace{1em}
In Section \ref{secexist} we consider the generic metric $g=dr^2 + e^{2h} (dx^2 + dy^2)$ over $M=\mathbb R \times \mathbb T^2$, where
$h=h(r)$ is
a function determining the size of the foliating flat tori, and a potential function $f=f(r)$ constant over these tori. We find a suitable
choice of $h$ and $f$ that makes the triple $(M,g,f)$ a soliton solution for the Ricci flow with bounded curvature, by means of the phase
portrait analisys of the soliton ODEs.

\begin{tma} \label{tma_exist}
There exists an expanding gradient Ricci soliton $(M,g,f)$ over the topological manifold $M = \mathbb R \times \mathbb T^2$ satisfying the
following properties:
\begin{enumerate}
 \item The metric has pinched sectional curvature $-\frac{1}{4} < sec < 0$.
 \item The soliton approaches the hyperbolic cusp expanding soliton on one end.
 \item The soliton approaches locally the flat Gaussian expanding soliton on a cone on the other end.
\end{enumerate}
More preciselly, $M$ admits a metric
$$g=dr^2 + e^{2h(r)} (dx^2 + dy^2)$$
where $(r,x,y)\in \mathbb R \times \mathbb S^1 \times \mathbb S^1$; and a potential function $f=f(r)$, satisfying the soliton equation and
with the stated bounds on the curvature, such that
$$h\sim \frac{r}{2} \quad \mathrm{and} \quad f \rightarrow cst \qquad  \mathrm{as} \quad r\rightarrow -\infty$$
and
$$h\sim \ln r \quad \mathrm{and} \quad f \sim -\frac{r^2}{4} \qquad  \mathrm{as} \quad r\rightarrow +\infty .$$
\end{tma}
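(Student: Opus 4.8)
The plan is to reduce the soliton equation to a planar autonomous ODE system and to realize the desired metric as a heteroclinic-type trajectory. First I would insert the warped ansatz $g = dr^2 + e^{2h(r)}(dx^2+dy^2)$ and $f = f(r)$ into \eqref{solitoneq} with $\epsilon = 1$. A direct computation of the Ricci tensor of this $1+2$ warped product (the torus fibers being flat) and of $\Hess f$ gives, in an orthonormal frame adapted to $\partial_r$ and the fibers, exactly two independent scalar equations: the radial equation $-2(h''+(h')^2)+f''+\tfrac12=0$ and the fiber equation $-(h''+2(h')^2)+h'f'+\tfrac12=0$. Setting $u=h'$ and $v=f'$ these decouple into the first-order autonomous system
\begin{equation}
u' = uv - 2u^2 + \tfrac12, \qquad v' = 2uv - 2u^2 + \tfrac12 . \label{planarsystem}
\end{equation}
In these variables the two sectional curvatures are the fiber curvature $-u^2$ and the radial curvature $-(u'+u^2)=-(uv-u^2+\tfrac12)$, so the pinching requirement $-\tfrac14<sec<0$ translates into the phase-plane conditions $0<u<\tfrac12$ together with $u^2-\tfrac12<uv<u^2-\tfrac14$.

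Next I would locate the endpoints as features of \eqref{planarsystem}. The point $(u,v)=(\tfrac12,0)$ is a fixed point; it corresponds to $h\sim r/2$, $f\equiv\mathrm{cst}$, i.e. the constant-curvature $-\tfrac14$ cusp, and linearization there yields the Jacobian $\left(\begin{smallmatrix}-2 & 1/2\\ -2 & 1\end{smallmatrix}\right)$ with eigenvalues $\tfrac{-1\pm\sqrt5}{2}$, so it is a saddle. Its one-dimensional unstable manifold therefore supplies a unique trajectory leaving the cusp as $r$ increases; I would select the branch entering the quadrant $0<u<\tfrac12$, $v<0$. The other end $r\to+\infty$ is not a finite fixed point but the degenerate regime $u\to0^+$, $v\to-\infty$ with $uv\to-\tfrac12$; matching $u\sim 1/r$, $v\sim -r/2$ recovers $h\sim\ln r$ and $f\sim-r^2/4$, i.e. the flat cone carrying the Gaussian potential. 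So the theorem amounts to showing that the chosen unstable-manifold trajectory runs, monotonically and for all $r\in\mathbb R$, from the saddle into this degenerate corner while staying in the pinching region.

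The core of the argument is then a global phase-plane analysis. I would build a trapping region bounded by arcs of the nullclines $v=2u-\tfrac1{2u}$ (for $u'=0$) and $v=u-\tfrac1{4u}$ (for $v'=0$) and by the curvature-constraint curves $uv=u^2-\tfrac12$ and $uv=u^2-\tfrac14$, check the sign of the vector field \eqref{planarsystem} on each boundary arc to confirm invariance, and verify that the unstable manifold of $(\tfrac12,0)$ enters and cannot leave this region. Inside it $u$ and $v$ are monotone, which both yields the curvature bounds and forces the trajectory toward the corner $u\to0^+$, $v\to-\infty$. Finally I would recover $h$ and $f$ by integrating $h'=u$, $f'=v$, noting that $e^{2h}>0$ for all $r$ makes $(M,g)$ complete on $\mathbb R\times\mathbb T^2$, and confirm the stated asymptotics at both ends.

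The main obstacle I anticipate is precisely the behavior at the $r\to+\infty$ end, since it sits at infinity in the $(u,v)$ plane rather than at a hyperbolic fixed point: I cannot simply invoke a stable-manifold theorem there. Handling it cleanly will require either a Poincaré-type compactification (e.g. a change of variables such as $(u,\,uv+\tfrac12)$ or $1/v$) that turns the corner into an honest fixed point amenable to linearization, or a direct barrier argument producing matched asymptotic expansions $u=\tfrac1r+O(r^{-2})$, $v=-\tfrac r2+O(1)$; establishing that the trajectory reaches this regime as $r\to+\infty$ (and not in finite $r$), with the precise rates needed for the claimed $h\sim\ln r$, $f\sim-r^2/4$, is the delicate step. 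Verifying that the trapping region is genuinely forward-invariant --- i.e. that none of the boundary arcs is crossed outward --- will require the careful sign computations that I would expect to be the most laborious part of the construction.
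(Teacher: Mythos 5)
Your proposal follows essentially the same route as the paper: the same reduction of the soliton equation to the planar autonomous system in $(h',f')$, identification of $(\tfrac12,0)$ as a saddle with the same Jacobian and eigenvalues, selection of the unstable separatrix into $\{0<u<\tfrac12,\,v<0\}$, a barrier/trapping-region argument built from the nullclines and curvature-constraint hyperbolas to confine the trajectory and force $u\to0$, a Poincar\'e compactification to handle the degenerate end at infinity, and l'H\^opital-type matching of $uv\to-\tfrac12$ to get $h\sim\ln r$, $f\sim -r^2/4$. The plan is correct and the anticipated delicate points (forward invariance of the barriers, the non-hyperbolic corner at infinity, completeness of the $r$-interval) are exactly the ones the paper addresses.
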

For the asymptotical notation ``$\sim$'', we write
$$\phi(r) \sim \psi(r) \quad \mathrm{as} \quad r\rightarrow \infty$$
if$$\lim_{r\rightarrow \infty} \frac{\phi(r)}{\psi(r)} = 1.$$

Let us remark that when $r\rightarrow +\infty$ the theorem states that the metric approaches $g=dr^2 + r^2 (dx^2 + dy^2)$. This is a
nonflat cone over the torus, namely its curvatures are $sec_{rx}=sec_{ry}=0$ and $sec_{xy}=-\frac{1}{r^2}$, but it indeed
approaches a flat metric when $r\rightarrow +\infty$.

This example is interesting in regard of the following known fact.
\begin{prop} \label{propcao}
Let $(M^n, g,f)$ with $n\geq 3$ be a complete noncompact gradient expanding soliton with $\Ric\geq \frac{-1}{2} + \delta$ for some
$\delta>0$. Then $f$ is a strictly concave exhaustion function, that achieves one maximum, and the underlying manifold $M^n$ is
diffeomorphic to $\mathbb R^n$. 
\end{prop}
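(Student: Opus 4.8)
The plan is to read everything off the soliton equation. For an expanding soliton the normalization is $\epsilon = 1$, so (\ref{solitoneq}) becomes $\Ric + \Hess f + \frac{1}{2}g = 0$, that is $\Hess f = -\Ric - \frac{1}{2}g$. The hypothesis $\Ric \geq (-\tfrac{1}{2}+\delta)g$ then yields immediately
$$\Hess f \;=\; -\Ric - \tfrac{1}{2}g \;\leq\; -\delta\, g \;<\; 0,$$
so $f$ is uniformly strictly concave. Everything else should follow from this single bound together with completeness; the soliton structure is no longer needed.

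First I would record the basic consequence of $\Hess f \le -\delta g$ along geodesics. If $\gamma$ is a unit-speed geodesic, then $\frac{d^2}{ds^2}f(\gamma(s)) = \Hess f(\gamma',\gamma') \le -\delta$, so $s \mapsto f(\gamma(s))$ is concave with second derivative at most $-\delta$. Fixing a base point $o$ and joining it to an arbitrary $q$ by a minimizing geodesic of length $d(o,q)$, integration of this inequality gives the quadratic estimate
$$f(q) \;\le\; f(o) + |\grad f(o)|\, d(o,q) - \tfrac{\delta}{2}\, d(o,q)^2 .$$
In particular $f(q) \to -\infty$ as $d(o,q) \to \infty$, so $f$ is bounded above and every superlevel set $\{f \ge c\}$ is bounded, hence compact by completeness. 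This is precisely the statement that $f$ is a proper exhaustion function (its superlevel sets exhaust $M$), and since a nonempty superlevel set is compact, $f$ attains a global maximum at some point $p$.

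Next I would establish uniqueness of the maximum using strict concavity directly. If $f$ attained its maximum at two distinct points $p_1,p_2$, then on the minimizing geodesic joining them $s \mapsto f(\gamma(s))$ would be strictly concave with equal endpoint values $\max f$, forcing $f(\gamma(s)) > \max f$ in the interior, a contradiction. Hence the maximum point $p$ is unique. The same strict concavity shows $p$ is the only critical point, since a critical point of a strictly concave function is necessarily its global maximum. Finally $\Hess f(p) \le -\delta g < 0$ is negative definite, so $p$ is a nondegenerate critical point of index $n$.

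It remains to deduce the diffeomorphism type, and this is where I expect the only real work. Having a proper function, bounded above, with a single nondegenerate critical point of index $n$, I would run Morse theory on $-f$ (equivalently, the negative gradient flow of $f$). Near $p$ the Morse lemma gives level sets $\{f = c\}$ diffeomorphic to $S^{n-1}$ for $c$ slightly below $\max f$; since there are no other critical values, the normalized gradient flow $\grad f / |\grad f|^2$ (which satisfies $X(f)=1$ away from $p$) trivializes the region between any two regular values, so every level set $\{f=c\}$ with $c<\max f$ is an $S^{n-1}$ and each superlevel set $\{f \ge c\}$ is a closed $n$-disk. Letting $c \to -\infty$ exhibits $M$ as a monotone union of open $n$-cells, each contained in the next, which is diffeomorphic to $\mathbb R^n$. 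The delicate point to handle carefully is the behaviour of the flow as $f \to \max f$, where $|\grad f| \to 0$, together with the passage to the limit $c \to -\infty$; packaging this cleanly, either by a direct parametrization $M \setminus \{p\} \cong S^{n-1} \times \mathbb R$ or by invoking the standard theorem identifying such monotone unions of cells with $\mathbb R^n$, is the main obstacle, while the curvature and soliton hypotheses enter only through the elementary Hessian bound of the first step.
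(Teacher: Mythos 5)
Your argument is correct and is essentially the argument the paper has in mind: the paper itself only cites \cite{Caoetal} (Lemma 5.5 and Remark 5.6) and sketches the same reasoning in the Remark of Section 3 — namely $\Hess f \le -\delta\, g$ forces $f$ to be a proper, strictly concave exhaustion function with a unique (nondegenerate) maximum, and the gradient flow then retracts $M$ onto a ball, giving $M \cong \mathbb R^n$. Your quadratic decay estimate along minimizing geodesics and the Morse-theoretic conclusion fill in that sketch correctly.
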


Cf. with \cite{Caoetal}, Lemma 5.5 and Remark 5.6. Our example proves that the lemma fails if one only assumes $\Ric>\frac{-1}{2}$. In
this critical case the soliton also has a strictly concave potential, but in this case $f$ has no maximum and it is not exhausting, and
actually this solution admits a different topology for the manifold, namely $\mathbb R \times \mathbb T^2$.

\vspace{1em}
In Section \ref{secuniq} we consider the general case of a metric over $\mathbb R \times \mathbb T^2$ with $sec>-\frac{1}{4}$, and we
prove that the only nonflat solution is the example previously constructed. The lower bound on the curvature implies a concavity property
for the potential function. This leads together with the prescribed topology to a general form of the coordinate expression of the metric,
that can be subsequently computed as the example. 

\begin{tma} \label{tma_uniq}
Let $(M,g,f)$ be a nonflat gradient Ricci soliton over the topological manifold $M = \mathbb R \times \mathbb T^2$ with bounded curvature
$sec > -\frac{1}{4}$. Then it is the expanding gradient soliton depicted on Theorem (\ref{tma_exist}).
\end{tma}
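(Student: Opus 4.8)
The plan is to reduce an arbitrary soliton with $sec>-\frac14$ to the warped–product normal form $g=dr^2+e^{2h(r)}(dx^2+dy^2)$ and then to invoke the phase–plane analysis behind Theorem~\ref{tma_exist}. I would first verify that the soliton is expanding, $\epsilon=1$: a nonflat shrinker is excluded by the Hamilton--Ivey--Perelman classification recalled in the introduction (none of $\mathbb S^3$, $\mathbb R^3$, $\mathbb S^2\times\mathbb R$ and their quotients is $\mathbb R\times\mathbb T^2$), and the steady case is likewise incompatible with this topology. Granting $\epsilon=1$, the equation reads $\Hess f=-\Ric-\frac12 g$. In dimension three, in an orthonormal Ricci eigenbasis each eigenvalue $\Ric_{ii}$ is a sum of two genuine sectional curvatures, so $sec>-\frac14$ forces $\Ric>-\frac12 g$; hence $\Hess f=-\Ric-\frac12 g<0$, i.e. $f$ is strictly concave. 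This is the concavity advertised in the introduction and in Proposition~\ref{propcao}.

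Next I would use concavity together with the topology to remove critical points. A strictly concave $f$ has at most one critical point, necessarily a strict maximum, and by the reasoning of Proposition~\ref{propcao} a maximum would make $M$ diffeomorphic to $\mathbb R^3$, contradicting $\pi_1(\mathbb R\times\mathbb T^2)=\mathbb Z^2$. Thus $\grad f\neq 0$ everywhere and $f$ is a submersion onto an interval $I=f(M)$. Flowing by $\grad f/|\grad f|^2$ and using completeness and the bounded geometry, I would present $M$ as a product $I\times\Sigma$ over this interval; since $I$ is contractible, $\Sigma$ is homotopy equivalent to $\mathbb T^2$ and hence (being a closed surface) diffeomorphic to $\mathbb T^2$. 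Reparametrizing the normal flow by arclength puts the metric in the form $g=dr^2+g_r$ with $f=f(r)$, the level tori being the slices $\{r=\mathrm{const}\}$.

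The heart of the argument is to prove that these tori are umbilic, flat, and scale isotropically, i.e. $g_r=e^{2h(r)}g_0$ with $g_0$ flat. Writing $S=\frac12\partial_r g_r$ for the shape operator of the level torus and $S_0$ for its trace-free part, the tangential soliton equation gives $\Ric|_{T\Sigma}+f'\,S+\frac12 g_r=0$, so the trace-free tangential Ricci equals $-f'S_0$. In dimension three the trace-free part of the normal Jacobi operator $X\mapsto R(\grad f/|\grad f|,X)\,\grad f/|\grad f|$ coincides (up to sign) with the trace-free tangential Ricci, so the trace-free part of the Riccati equation $\partial_r S+S^2+R(N,\cdot)N=0$ becomes a \emph{linear homogeneous} ODE for $S_0$ along the normal geodesics, of the schematic form $\partial_r S_0+a(r)S_0=0$. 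The main obstacle lies exactly here: I must show that completeness and the boundedness of the curvature at both ends (the asymptotic cusp as $r\to-\infty$ and the asymptotic cone as $r\to+\infty$ appearing in Theorem~\ref{tma_exist}) force the only admissible solution to be $S_0\equiv 0$, since a nonzero solution of a first-order linear ODE is nowhere zero and, after integrating the coefficient, would blow up at one end and violate the curvature bound. Umbilicity $S_0\equiv 0$, fed back into the Codazzi equation, yields $S=\lambda(r)\,\mathrm{Id}$ with $\lambda$ constant on each torus, whence $\partial_r g_r=2\lambda(r)\,g_r$ and $g_r=e^{2h(r)}g_0$ for a fixed metric $g_0$. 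The tangential equation then says $\Ric_{g_0}$ is a multiple of $g_0$, so $g_0$ has constant curvature; on $\mathbb T^2$ this means $g_0$ is flat, and a linear change of the torus coordinates normalizes $g_0=dx^2+dy^2$.

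Finally, with $g=dr^2+e^{2h(r)}(dx^2+dy^2)$ and $f=f(r)$, the soliton equation collapses to exactly the ODE system treated in Section~\ref{secexist}. The pinching $-\frac14<sec<0$ together with completeness selects the same orbit in the phase portrait as in Theorem~\ref{tma_exist}, and the uniqueness of that trajectory identifies $(M,g,f)$ with the soliton constructed there, completing the proof.
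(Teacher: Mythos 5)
Your outline shares its skeleton with the paper's proof --- $sec>-\frac14$ gives $\Hess f<0$, strict concavity plus the topology rules out a maximum of $f$, and one reduces to the warped product $dr^2+e^{2h(r)}(dx^2+dy^2)$ and the phase portrait of Section \ref{secexist} --- but the middle of your argument assumes away precisely the step that carries the weight of the paper's uniqueness proof. When you say ``reparametrizing the normal flow by arclength puts the metric in the form $g=dr^2+g_r$ with $f=f(r)$,'' you are implicitly asserting that $|\grad f|$ is constant on each level set of $f$ (equivalently, that the level sets are equidistant and the integral curves of $\grad f/|\grad f|$ are geodesics). This is not automatic for a gradient soliton: in general one only gets $g=u^2\,dr^2+\tilde g$ with $u=u(r,x,y)$, and showing $u$ depends only on $r$ is the crux. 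The paper proves it by writing out the full soliton PDE system, isolating the two equations containing no $r$-derivatives, observing that at any critical point of $u$ on a torus the Hessian is a multiple of the identity --- so a Morse function $u$ could only have indices $0$ and $2$, impossible on $\mathbb T^2$ --- hence some critical point is degenerate, and then an induction on derivatives plus analyticity of Ricci flow solutions forces $u$ to be constant on each torus. Nothing in your sketch replaces this. The same issue undermines your Riccati computation: the equation $\partial_rS+S^2+\mathcal R_N=0$ governs the shape operators of an \emph{equidistant} family along its normal geodesics, which the level sets of $f$ are not known to form at that stage.

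Two further points. First, the compactness of the level sets of $f$ (needed before you may call $\Sigma$ a closed surface, and before the flow of $\grad f/|\grad f|^2$ can trivialize $M$) is itself nontrivial; the paper proves it by extracting a limit geodesic line through the middle torus from minimizing segments joining the two ends and contradicting strict concavity of $f\circ\gamma$. Second, your umbilicity step --- the part you yourself flag as ``the main obstacle'' --- is not only unfinished but circular as written: you control the coefficient of the linear ODE for $S_0$ using the asymptotics of $h'$ and $f'$ from Theorem \ref{tma_exist}, i.e., of the very soliton you are trying to prove the metric equals. A correct argument would have to derive the blow-up of $|S_0|$ at one end from the hypotheses ($sec>-\frac14$, completeness) alone. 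The paper avoids the Riccati route altogether: once $u\equiv1$, the off-diagonal soliton equations give $\partial_r\partial_xh=\partial_r\partial_yh=0$, and the diagonal ones force $e^{-2h}\bigtriangleup h$ (the Gauss curvature of the tori) to be constant on each torus, hence zero by Gauss--Bonnet, whence $h=h(r)$. So your route is genuinely different in the middle, but as it stands both essential reductions --- to $f=f(r)$ with unit normal coordinate, and to $h=h(r)$ --- are left unproved.
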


\vspace{1em}
In section \ref{seccurv} we explore a growth property of the scalar curvature on our soliton. In nonnegative sectional curvature, the
evolution of $Rm$ and Harnack inequalities \cite{Ham3} imply that $\frac{d R}{dt}\geq - \frac{R}{t}$ pointwise, and in particular $\frac{d
R}{dt}\geq 0$ for all $t$ if the solution is also ancient. Our example exposes that this is not the
case in negative scalar curvature, even with a soliton solution. Despite the self-similarity, the behaviour of the curvature growth is
different
at different times. The combined effects of the diffeomorphism translation and the homothety act in opposite manner. For short time after
birth, the negative curvature is increasing everywhere. After some small time, there appear points where the curvature is
decreasing, but eventually all points recover the increase of the scalar curvature and the limit of the curvature is zero for
every
fixed point as $t\rightarrow +\infty$.

\begin{tma} \label{tma_evolcur}
Let $(M,g(t))$ be the (soliton) Ricci flow defined on $M=\mathbb R \times \mathbb T^2 $ and for $t\in(-1,+\infty)$, such that $g(0)=g_0$
where $g_0$ is the metric constructed in Theorem \ref{tma_exist}. Let $R=R(t)$ be the scalar curvature of $g(t)$. Then there exists
$\delta<0$ such that
\begin{itemize}
\item for all $t\in (\delta, +\infty)$ there exist points in $M$ with $\frac{\partial R}{\partial t} >0$ and points with $\frac{\partial
R}{\partial t} <0$
\item for some $-1<t<\delta$, it is satisfied $\frac{\partial R}{\partial t} >0$ everywhere in $M$.
\end{itemize}
\end{tma}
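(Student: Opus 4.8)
The plan is to reduce everything to the self-similar structure and then to a single scalar function of $r$. Writing the flow as $g(t)=(1+t)\,\phi_t^*g_0$, where $\phi_t$ is generated by the time-dependent vector field $\frac{1}{1+t}\grad f$ and $\phi_0=\mathrm{id}$, the scalar curvature of $g(t)$ at a fixed point $p$ is $R(p,t)=\frac{1}{1+t}R_0(\phi_t(p))$, with $R_0$ the scalar curvature of $g_0$. Differentiating in $t$ (or, equivalently, feeding the scalings of $\Delta$ and of $|\Ric|^2$ into the evolution equation $\partial_tR=\Delta R+2|\Ric|^2$) gives
\[
\frac{\partial R}{\partial t}(p,t)=\frac{1}{(1+t)^2}\,Q\bigl(\phi_t(p)\bigr),\qquad Q:=\Delta_{g_0}R_0+2|\Ric_0|^2=-R_0+\langle\grad R_0,\grad f\rangle .
\]
Since $g_0$, $f$, and hence $Q$, depend only on $r$, the sign of $\partial_tR$ at $(p,t)$ is the sign of the fixed one-variable function $Q(r)$ evaluated at the flowed radius $\rho(r,t)$, where $\rho$ solves $\partial_t\rho=\frac{1}{1+t}f'(\rho)$ with $\rho(r,0)=r$. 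The proof therefore splits into (i) determining the sign of $Q(r)$, and (ii) tracking how the radial flow moves points across the zero set of $Q$.

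For (i) I would use the curvature formulas $R_0=-4h''-6(h')^2$ and $\Ric_{rr}=-2\bigl(h''+(h')^2\bigr)$, together with the soliton identity $\grad R_0=2\Ric(\grad f)$, which here reads $R_0'=2\Ric_{rr}f'$ and gives the convenient form $Q=-R_0+2\Ric_{rr}(f')^2$. At the cusp end $r\to-\infty$ the soliton limits to the constant-curvature cusp, so $R_0\to-\tfrac32$ while $\grad R_0,\grad f\to0$, whence $Q\to\tfrac32>0$; this supplies points with $\partial_tR>0$. At the cone end the leading asymptotics $h\sim\ln r$, $f\sim-r^2/4$ give $R_0\sim-2/r^2$ and a cancellation: both $-R_0$ and $\langle\grad R_0,\grad f\rangle$ are of order $r^{-2}$ with opposite signs, so $Q$ vanishes to leading order. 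I expect this cancellation to be the main obstacle: one must expand $h$ and $f$ past their leading terms using the soliton ODEs, keep enough orders to read off the sign of $Q$ near the cone end, and — crucially — prove that $\{Q<0\}$ is a nonempty region and locate it. Equivalently, one must show that on some interval of $r$ the curvature $R_0$ is concave enough, as a function on $M$, that $\Delta_{g_0} R_0<-2|\Ric_0|^2$.

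For (ii) I would exploit that in the reparametrised time $\tau=\ln(1+t)$ the radial equation becomes autonomous, $\rho_\tau=f'(\rho)$, with $f'<0$ everywhere (strict concavity of $f$, with $f'\to0$ at the cusp and $f'\sim-r/2$ at the cone). Hence every trajectory is strictly decreasing in $\tau$, sweeping monotonically from $\rho\to+\infty$ as $t\to-1^+$ down to $\rho\to-\infty$ as $t\to+\infty$. In particular, for each fixed $p$ the flowed radius $\rho(r,t)$ eventually enters the cusp region where $Q>0$, so $\partial_tR(p,t)>0$ for all large $t$ (this is the ``recovery'', accompanied by $R(p,t)\to0$), while as $t\to-1$ the image is driven toward the cone end.

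Finally I would assemble the two bullets by comparing, at each time, the range of $\rho(\cdot,t)$ with the regions $\{Q>0\}$ and $\{Q<0\}$, and define $\delta$ as the transition time at which points with $\partial_tR<0$ first occur. The remaining delicate point is to make this comparison uniform in $p$: to pin down the first time a region with $\partial_tR<0$ appears, and to verify that, for $t$ close enough to the birth time, no such region is present, so that $\partial_tR>0$ holds everywhere there. This bookkeeping should rest entirely on the monotone radial flow from (ii) and the located sign change of $Q$ from (i), and I expect it to be routine once the sign of $Q$ near the cone end is fully understood.
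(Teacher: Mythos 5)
Your reduction rests on the identity $\partial_tR(p,t)=\frac{1}{(1+t)^2}\,Q(\phi_t(p))$ with $Q=\langle\grad R_0,\grad f\rangle-R_0$ a fixed function of $r$ alone, and this is exactly where the argument collapses: for each fixed $t\in(-1,+\infty)$ the map $\phi_t$ is a diffeomorphism of $M$ \emph{onto} $M$ (the radial flow $\rho_\tau=f'(\rho)$ has no finite-time escape at interior times, since $|f'|$ grows at most linearly), so the set of signs attained by $\partial_tR(\cdot,t)$ on $M$ equals the set of signs attained by $Q$ on $M$, for every $t$. The ``range of $\rho(\cdot,t)$'' that you propose to compare with $\{Q>0\}$ and $\{Q<0\}$ is all of $\mathbb R$ at every time, so there is no transition time $\delta$: under your formula either $Q$ changes sign and the second bullet fails, or it does not and the first bullet fails. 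The concluding ``bookkeeping'' is therefore not routine but impossible. (Your observation that each \emph{fixed} $p$ eventually drifts into the region $Q>0$ is correct and recovers the paper's closing remark, but it concerns fixed $p$ and varying $t$, whereas the theorem asserts something about fixed $t$ and varying $p$.) Separately, even granting a usable reduction, you explicitly leave open the one computation that decides everything, namely the sign near the cone end where the leading orders cancel; this is not a minor loose end, since the paper needs six (and, at the degenerate value, ten) algebraic blow-ups at the tangency at infinity plus an explicit barrier function to settle precisely that question.

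The source of the mismatch with the paper is the generator of $\phi_t$. The paper integrates $\dot r=F(r)$ (the unrescaled field $\grad f$), not $\dot r=F(r)/(1+t)$, and consequently obtains
$$\frac{\partial R}{\partial t}=\frac{2}{(t+1)^2}\Bigl[(2HF-H^2+1)+(t+1)F^2(-2HF+2H^2-1)\Bigr],$$
whose zero locus $\{C_t=0\}$ genuinely moves with $t$; the whole proof is then a phase--plane analysis of whether $\{C_t=0\}$ meets the separatrix $S$ (forced intersection for $t\ge0$ via the order-of-contact computation at infinity, and a barrier argument $\langle\grad C_t,V\rangle>0$ for $t$ near $-1$). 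With the standard normalization $\dot\phi_t=\frac{1}{1+t}\grad f$ --- the one under which $g(t)=(1+t)\phi_t^*g_0$ actually solves $\partial_tg=-2\Ric$, and the one your own cross-check via $\partial_tR=\Delta R+2|\Ric|^2$ and the soliton identity confirms --- the factor $(t+1)$ inside the bracket disappears and the family $C_t$ degenerates to the single curve $C_0$. You must confront this discrepancy head-on before anything else: as written, your (standard) convention renders the two bullets of the statement mutually exclusive, while the paper's nontrivial $t$-dependence enters only through its choice of the unrescaled flow.
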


\vspace{1em}
Most tedious computations thorough the paper can be performed and checked using Maple or other similar software, therefore no step-by-step
computations will be shown. Pictures were drawn using Maple and the P4 program \cite{progP4}.

\emph{Acknowledgements:} The author was partially supported by Feder/Mineco through the Grant MTM2009-0759. He is indebt to his advisor,
Joan Porti, for all his guidance. He also wishes to thank Joan Torregrosa for pointing out the technique of compactified phase portraits.

\section{The asymptotically cusped soliton} \label{secexist}
Let us consider the metric
\begin{equation}
g=dr^2 + e^{2h(r)} (dx^2 + dy^2) \label{metrica} 
\end{equation}
where $h=h(r)$ is a one-variable real function, and a potential function $f=f(r)$ depending also only on the $r$-coordinate. The underlying topological manifold can be taken $(r,x,y)\in 
\mathbb R \times \mathbb S^1\times \mathbb S^1 = \mathbb R \times \mathbb T^2$ since $\mathbb R^3$ with this metric admits the appropriate
quotient on the $x,y$
variables.
Standard riemannian computations yield the following equalities.

\begin{lema}\label{lema_qtties}
The metric in the form (\ref{metrica}) associates the following geometric quantities:
\begin{align*} 
 \Ric &= -2((h')^2 + h'')dr^2 -e^{2h}(h''+2(h')^2)(dx^2 + dy^2) ,\\
 R &= -4 h'' -6 (h')^2 ,\\
 sec_{xy} &= -(h')^2  ,\\
 sec_{rx}=sec_{ry} &= -((h')^2 + h'') ,\\
 \Hess f &= f'' dr^2 + e^{2h}f'h'(dx^2 + dy^2) ,\\
 \Delta f &= 2h'f' + f'' ,\\
 \grad f &= f' \textstyle \frac{\partial}{\partial r} ,\\
 \nabla f &= f' dr ,\\
 |\grad f|^2 &= (f')^2 .\\
\end{align*}
\end{lema}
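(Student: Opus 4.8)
The plan is to treat $g$ as a warped product $dr^2 + e^{2h}g_{\mathbb T^2}$ over the flat torus and to extract the curvature through the method of moving frames, which keeps the bookkeeping minimal. First I would introduce the orthonormal coframe
$$\theta^0 = dr, \qquad \theta^1 = e^{h}dx, \qquad \theta^2 = e^{h}dy,$$
with dual frame $e_0 = \partial_r$, $e_1 = e^{-h}\partial_x$, $e_2 = e^{-h}\partial_y$. Differentiating gives $d\theta^0 = 0$ and $d\theta^i = h'\,\theta^0\wedge\theta^i$ for $i=1,2$, so Cartan's first structure equation $d\theta^i = -\omega^i_{\ j}\wedge\theta^j$ together with the antisymmetry $\omega_{ij} = -\omega_{ji}$ of an orthonormal connection forces
$$\omega^1_{\ 0} = h'\,\theta^1, \qquad \omega^2_{\ 0} = h'\,\theta^2, \qquad \omega^1_{\ 2} = 0.$$

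Next I would feed these into the second structure equation $\Omega^i_{\ j} = d\omega^i_{\ j} + \omega^i_{\ k}\wedge\omega^k_{\ j}$, which yields
$$\Omega^1_{\ 0} = \bigl((h')^2 + h''\bigr)\theta^0\wedge\theta^1, \quad \Omega^2_{\ 0} = \bigl((h')^2 + h''\bigr)\theta^0\wedge\theta^2, \quad \Omega^2_{\ 1} = (h')^2\,\theta^1\wedge\theta^2,$$
from which the sectional curvatures $sec_{rx} = sec_{ry} = -((h')^2 + h'')$ and $sec_{xy} = -(h')^2$ can be read off, the overall sign being fixed by the convention relating $\Omega_{ij}$ to $K(e_i,e_j)$. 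Summing the relevant sectional curvatures then gives the diagonal Ricci entries $\Ric(e_0,e_0) = -2((h')^2+h'')$ and $\Ric(e_1,e_1) = \Ric(e_2,e_2) = -(h''+2(h')^2)$; rewriting these in the coordinate coframe produces the stated expression for $\Ric$, and tracing gives $R = -4h'' - 6(h')^2$.

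Finally, for the potential quantities, since $f$ depends on $r$ alone the expressions $\grad f = f'\partial_r$, $\nabla f = f'\,dr$ and $|\grad f|^2 = (f')^2$ are immediate from $g^{rr}=1$. For the Hessian I would use $\Hess f(X,Y) = XYf - (\nabla_X Y)f$, so that only the $\partial_r$-component of $\nabla_X Y$ contributes; the Christoffel symbols $\Gamma^r_{rr}=0$ and $\Gamma^r_{xx}=\Gamma^r_{yy}=-h'e^{2h}$ then give $\Hess f = f''\,dr^2 + e^{2h}h'f'(dx^2+dy^2)$, and $\Delta f = \trace_g \Hess f = f'' + 2h'f'$. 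This lemma is a routine verification rather than a genuine theorem, so there is no real obstacle beyond keeping the curvature sign convention consistent throughout; as the paper notes, each identity is readily confirmed with computer algebra.
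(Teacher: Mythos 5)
Your computation is correct: the structure equations, the resulting curvature two-forms, the Christoffel symbols $\Gamma^r_{xx}=\Gamma^r_{yy}=-h'e^{2h}$, and all nine quantities check out (a quick sanity check with $h'=\pm\frac12$, $h''=0$ recovers the constant-curvature $-\frac14$ hyperbolic case, confirming the sign convention). The paper offers no proof at all beyond the remark that these are ``standard riemannian computations'' verifiable by computer algebra, so your moving-frames derivation is simply a clean, explicit instantiation of exactly that routine verification.
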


Hence the soliton equation (\ref{solitoneq}) for this metric turns into
$$\left( \frac{\epsilon}{2} + f'' -2(h')^2 -2 h''\right) dr^2 + e^{2h}\left( \frac{\epsilon}{2}+h'f'-2(h')^2 - h''\right) (dx^2 + dy^2)
=0  .$$
This tensor equation is equivalent to the ODEs system
\begin{equation}
 \left\{ \begin{array}{rcl}
	  \frac{\epsilon}{2} + f'' -2(h')^2 -2 h'' &=& 0 \\
	  \frac{\epsilon}{2}+h'f'-2(h')^2 - h''&=& 0 .
         \end{array} \label{eqssolcusp}
\right.\end{equation}
Let us remark that this system would be of second-order in most coordinate systems, but in ours we can just change variables $H=h'$ and
$F=f'$, and rearrange to get a first-order system
\begin{equation*}
\left\{ \begin{array}{rcl}
          H' & = & HF -2 H^2 + \frac{\epsilon}{2} \\
          F' & = & 2HF -2 H^2 + \frac{\epsilon}{2} .	 
         \end{array}
\right.
\end{equation*}

We can solve qualitativelly this system using a phase portrait analysis (see Figure \ref{retrato}). Every trajectory on the phase portrait
represents a soliton, but will not have in general bounded curvature. Actually, bounded curvature is achieved if and only if both $H$
and
$H'$ are bounded on the trajectory.

\begin{figure}[ht]
\centering
\includegraphics[width=0.6\textwidth]{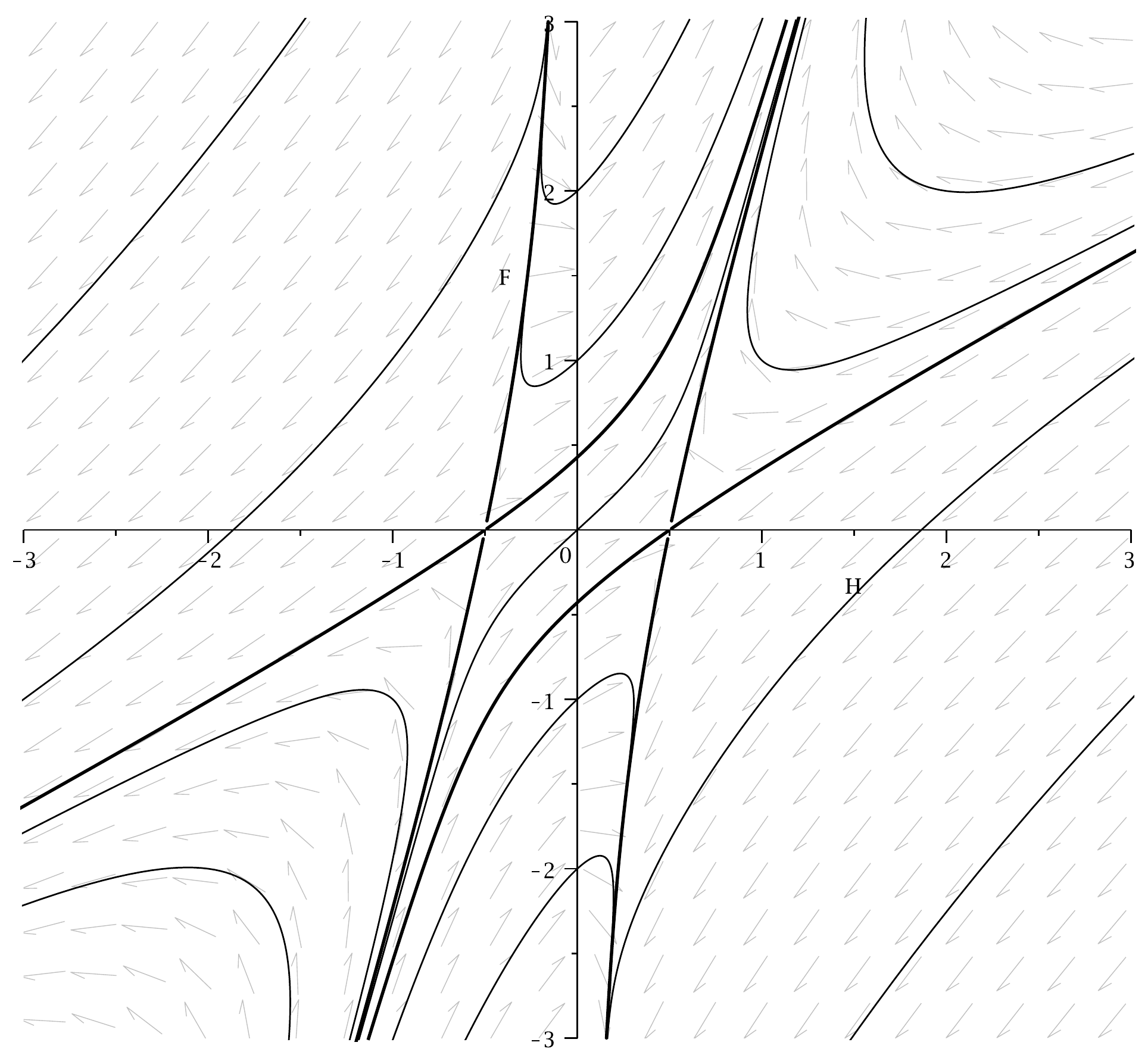}
\caption{Phase portrait of the system (\ref{eqsphase}).}
\label{retrato}
\end{figure}

The critical points (stationary solutions) of the system are found by solving $\{ H'=0 , F'=0 \}$. If the soliton is shrinking
($\epsilon=-1$), there are no critical points and no trajectories with bounded curvature, agreeing with Perelman's classification. If the
soliton is steady ($\epsilon=0$), there is a whole straight line $\{H=0\}$ of fixed
points representing all of them the flat steady soliton. In this case there are neither trajectories with bounded $H$, hence all
solutions have unbounded negative curvature at least in one end. Let us assume henceforth that the soliton is
expanding ($\epsilon=1$), so our system is
\begin{equation}
\left\{ \begin{array}{rcl}
          H' & = & HF -2 H^2 + \frac{1}{2} \\
          F' & = & 2HF -2 H^2 + \frac{1}{2} .	 
         \end{array} \label{eqsphase}
\right.
\end{equation}
There are two critical points, $$(H,F)=(\pm\frac{1}{2},0).$$
The critical point $(\frac{1}{2},0)$ corresponds to a soliton with $h(r)=\frac{r}{2}+c_1$ and $f(r)=c_2$, the gradient
vector field is null, and the metric is $g_0=dr^2 + e^{r+c_1}(dx^2+dy^2)$, which is a complete hyperbolic metric, with constant
sectional curvature equal to $- \frac{1}{4}$, and possesses a cusp at $r\rightarrow -\infty$. As a Ricci flow it is $g(t)=(t+1)g_0$, it
evolves only by homotheties, and it is born at $t=-1$. The symmetric critical point $(-\frac{1}{2},0)$ represents the
same soliton, just reparameterizing $r\rightarrow -r$.

The phase portrait of the system (\ref{eqsphase}) has a central symmetry, that is, the whole phase portrait is invariant
under the change $(H,F,r)\rightarrow (-H,-F,-r)$, so it is enough to analyze one critical point and half the trajectories.

We shall see that the critical points are saddle points, and there is a separatrix trajectory emanaing from each one of them that
represents the soliton metric we are looking for. Both trajectories represent actually the same soliton up to reparameterization.

\begin{lema}
 Besides the stationary solutions, and up to the central symmetry, there is only one trajectory $S$ with bounded $H$. This trajectory is a
separatrix joining a critical point and a point in the infinity on a vertical asymptote.
\end{lema}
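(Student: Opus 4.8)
The plan is to combine a local linear analysis at the finite critical points with an explicit invariant region for existence, and the Poincaré compactification for the global count. First I would linearize (\ref{eqsphase}): the Jacobian is $\left(\begin{smallmatrix} F-4H & H \\ 2F-4H & 2H\end{smallmatrix}\right)$, which at $(\frac12,0)$ equals $\left(\begin{smallmatrix} -2 & \frac12 \\ -2 & 1\end{smallmatrix}\right)$, with trace $-1$ and determinant $-1<0$; hence the eigenvalues $\frac{-1\pm\sqrt5}{2}$ are real of opposite sign and the point is a saddle ($(-\frac12,0)$ likewise, by the central symmetry $(H,F,r)\mapsto(-H,-F,-r)$ already noted). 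A saddle carries exactly four separatrix half-orbits, two stable and two unstable, tangent to the eigendirections, so the bounded-$H$ trajectory we seek, if it exists, must be one of these four branches.

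A convenient reduction is to set $w:=F-2H$, i.e. $w=f'-2h'$. Then $w'=F'-2H'=2H^2-\frac12=2(H-\frac12)(H+\frac12)$ while $H'=Hw+\frac12$, so in the $(H,w)$-chart the system reads $H'=Hw+\frac12$, $w'=2(H^2-\frac14)$, the critical points are $(\pm\frac12,\mp1)$, and $w$ is \emph{strictly monotone} on each of the strips $|H|<\frac12$ and $|H|>\frac12$. I would use this monotone quantity as the main bookkeeping device. For existence, take the unstable branch leaving $(\frac12,-1)$ along the eigendirection in which $H$ and $w$ decrease, and consider the box $\mathcal R=\{0<H<\frac12,\ w<-1\}$: the field points strictly inward on all three sides, since on $H=\frac12$ one has $H'=\frac{w+1}{2}<0$, on $H=0$ one has $H'=\frac12>0$, and on $w=-1$ one has $w'=2(H^2-\frac14)<0$. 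Thus $\mathcal R$ is positively invariant, the separatrix enters and stays in it with $0<H<\frac12$ bounded and $w$ strictly decreasing; since there is no interior critical point, $w\to-\infty$, and $H\to0$ (if $H\to H_*>0$ then $H'=H_*w+\frac12\to-\infty$, impossible). A balance of orders then gives $HF\to-\frac12$, i.e. $H\sim-\frac1{2F}\to0^+$ while $F\to-\infty$, which is exactly the vertical asymptote $H=0$ claimed; the central symmetry produces the twin separatrix at $(-\frac12,0)$, the same soliton reparametrized.

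For uniqueness I would pass to the Poincaré compactification. Any bounded-$H$ orbit has $|H|\le M$, so $w'$ is bounded and $w$, then $H$, grow at most linearly in $r$; the orbit is therefore complete, and its $\alpha$- and $\omega$-limits lie in the closed strip $\{|H|\le M\}$, whose only points at infinity are the two vertical ones, $F\to\pm\infty$. Computing the infinite critical points, the factor $\cos\theta\,Q_2-\sin\theta\,P_2$, with quadratic parts $P_2=HF-2H^2$ and $Q_2=2HF-2H^2$, factors as $\cos\theta\,(4\cos\theta\sin\theta-2\cos^2\theta-\sin^2\theta)$ and so vanishes for $\cos\theta=0$ (the two vertical points) and for $\tan\theta=2\pm\sqrt2$ (four non-vertical points, at which $H\to\pm\infty$). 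Boundedness of $H$ rules out the non-vertical infinities, so a bounded-$H$ orbit can only limit on a finite saddle or on a vertical point at infinity; combined with the monotonicity of $w$ and the saddle separatrix structure this forces every such orbit to be one of the two symmetric separatrices found above. In particular the remaining unstable branch (increasing $H$) has $H\to+\infty$, so there is no saddle-to-saddle heteroclinic and no extra bounded orbit.

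The main obstacle is precisely this last global step. A trajectory may approach a vertical point at infinity with $H$ tending to a nonzero constant, or even diverging sublinearly, so "limiting on a vertical infinity" is strictly weaker than "bounded $H$". The delicate point is therefore the local analysis at the vertical infinite critical point, which turns out to be semi-hyperbolic, establishing that exactly one orbit reaches it with bounded $H$, together with a Poincaré--Bendixson argument on the compactified disk to exclude periodic or otherwise recurrent behaviour. This is exactly where the compactified phase portrait is indispensable.
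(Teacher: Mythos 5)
Your proposal follows the same overall route as the paper: linearization showing the finite critical points are saddles, Poincaré compactification, identification of the critical points at infinity, and a Poincaré--Bendixson argument on the compactified disc. Your computation of the directions at infinity ($\cos\theta=0$ or $\tan\theta=2\pm\sqrt2$) agrees with the paper's statement that $H/F$ tends to $0$, $1+\tfrac{\sqrt2}{2}$ or $1-\tfrac{\sqrt2}{2}$. Where you genuinely depart from the paper is in the existence half: the auxiliary variable $w=F-2H$ with $w'=2H^2-\tfrac12$, together with the positively invariant box $\{0<H<\tfrac12,\ w<-1\}$, replaces the paper's isocline barriers (which the paper only deploys in the \emph{following} lemma to locate the asymptote at $H=0$). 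This is a nice simplification: it gives the confinement $0<H<\tfrac12$, the escape $F\to-\infty$, and the exclusion of periodic orbits in the strip essentially for free, and it is more self-contained than chasing three hyperbolic isoclines.

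The thin spot is the global uniqueness count, and you have correctly diagnosed it yourself: the vertical point at infinity is a node, so it attracts an open set of orbits --- indeed every orbit entering your invariant box reaches it with \emph{forward}-bounded $H$ --- so ``exactly one orbit reaches it with bounded $H$'' is not the right formulation; what singles out $S$ is two-sided boundedness. To close the argument you would still need to check (i) that the two stable separatrices of $(\tfrac12,0)$ leave the strip $|H|<\tfrac12$ in backward time (for the branch entering $\{H>\tfrac12,\ w<-1\}$ this follows from your own invariant-region method, since that region is negatively invariant and there $H$ grows at least exponentially backwards; the other branch needs a separate barrier), hence there is no heteroclinic and no bounded stable branch; and (ii) that no non-separatrix orbit runs from one vertical infinity to the other inside a bounded strip. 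The paper is equally terse here --- it simply asserts that the finite and infinite critical points and their types determine the portrait --- so your proof is at the same level of completeness as the published one, with an honest flag on exactly the step that both arguments leave to the compactified picture.
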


\begin{proof}
The linearization of the system is 
$$\left(\begin{array}{c}
        H' \\ F'
        \end{array} \right)
= \left(\begin{array}{cc}
	F-4H & H \\
	2F-4H & 2H
        \end{array} \right)
 \left(\begin{array}{c}
        H \\ F
        \end{array} \right)
.$$
The matrix of the linearized system has determinant $-4H^2 \leq 0$, so the critical points are saddle points. For each one, there are
two eigenvectors determining four separatrix trajectories; being two of them attractive, two of them repulsive, according to the sign of the
eigenvalue. 

We are interested in one of the two repulsive separatrix emanating from the critical point $(H,F)=(\frac{1}{2},0)$, pointing towards the
region $\{H<\frac{1}{2}, F<0\}$. We shall see that has this is the only solution curve (together with its symmetrical) with bounded $H$
along its trajectory, so it represents a metric with bounded curvature.

In order to analyze the asymptotic behaviour of the trajectories, we perform a projective compactification of the plane, as
explained for instance in \cite{DumLliArt}, Ch. 5. The compactified plane maps into a disc where pairs of antipodal points on the boundary
represent
the asymptotic directions, Figure \ref{retrato_compact} shows the compactified phase portrait of (\ref{eqsphase}). 
\begin{figure}[ht]
\centering
\includegraphics[width=0.4\textwidth]{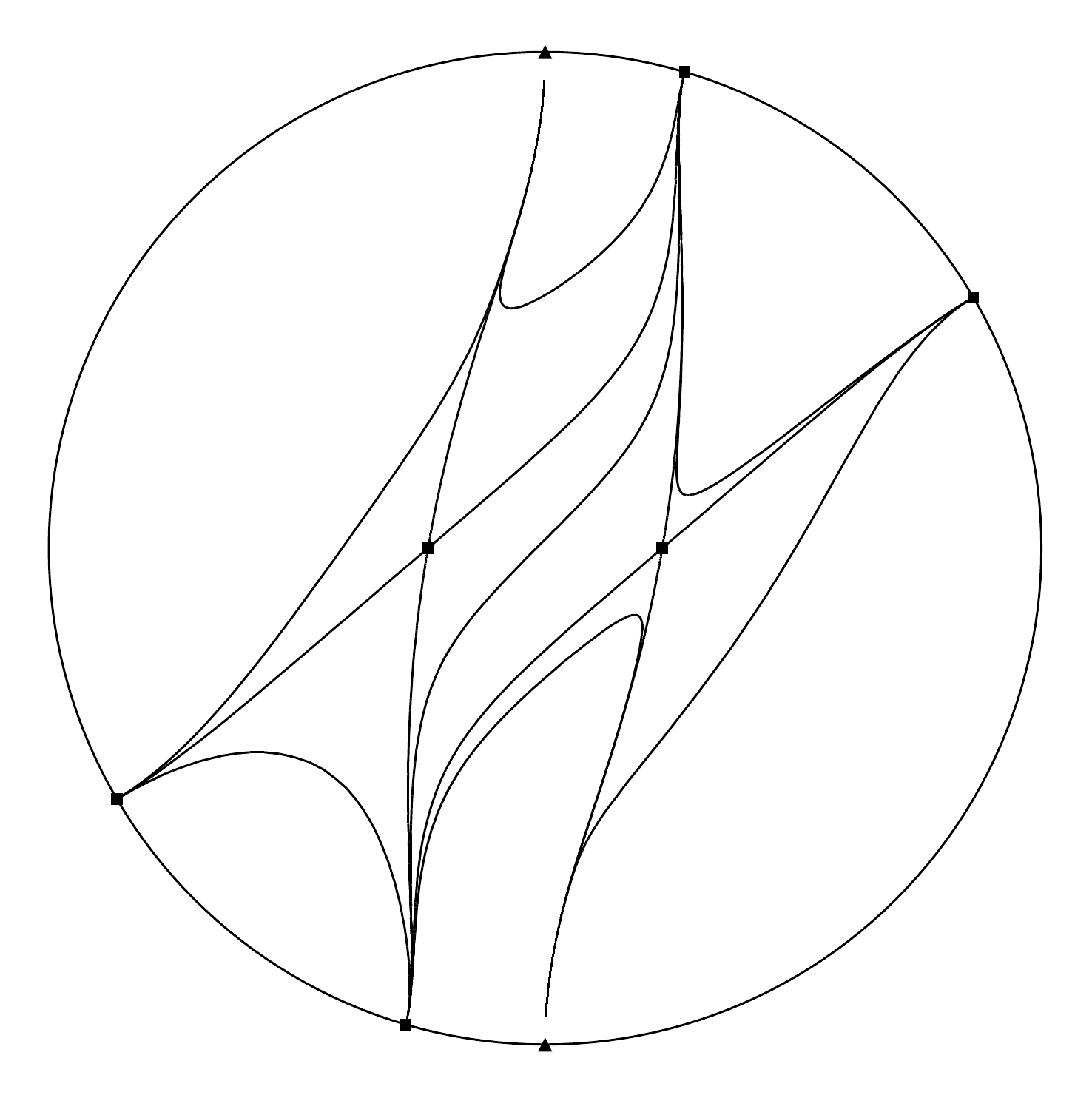}
\caption{Compactified phase portrait of the system (\ref{eqsphase}).}
\label{retrato_compact}
\end{figure}
A standard technique for polynomial systems is to perform a change of charts on the projective plane so that critical points at infinity can
be studied. A sketch is as follows: a polynomial system 
$$\left\{\begin{array}{l}\dot x = P(x,y) \\ \dot y = Q(x,y)  \end{array} \right. $$ 
can be thought as lying on the $\{z=1\}$ plane in the $xyz$-space. By a central projection this maps to a vector field and a phase portrait
on the unit sphere, or in the projective plane after antipodal identification. In order to do this, it may be necessary to resize
the vector field as 
$$\left\{\begin{array}{l}\dot x = \rho(x,y) P(x,y) \\ \dot y = \rho(x,y) Q(x,y)  \end{array} \right. $$
so that the vector field keeps bounded norm on the equator. However, this change only reparameterizes the trajectories. A global picture can
be obtained by orthographic projection of the sphere on the equatorial disc, as in Figure \ref{retrato_compact}, or it can be projected
further to a plane $\{x=1\}$ or $\{y=1\}$ in order to study the critical points at the infinity.
Let us remark that this technique works only for polynomial systems since the polynomial growth ratio suits the algebraic change of
variables.

In our system, this analysis yields that for every trajectory the ratio $H/F$ tends to either $0$, $1+\frac{\sqrt{2}}{2} $ or
$1-\frac{\sqrt{2}}{2} $ as $r\rightarrow \pm \infty$;
represented by the pairs of antipodal critical points (of type node) at infinity. The knowledge of the finite and infinite critical points,
together with their type, determines qualitativelly the phase portrait of Figure \ref{retrato_compact} by the Poincaré-Bendixon theorem.
Thus, a trajectory with bounded $H$ on the $\mathbb R^2$ portrait, when seen on the $\mathbb RP^2$ portrait must have their ends either on
the finite saddle points or on the infinity node with $H/F$ ratio equal to $0$ (meaning a vertical asymptote). The only trajectory
satisfying this condition is the claimed separatrix and its symmetrical. 
\end{proof}

We shall see that this trajectory $S$ is parameterized by $r\in(-\infty,+\infty)$,
and when $r\rightarrow -\infty$ the function $h(r)$ behaves as $\frac{r}{2}$ and then the solution is asymptotically a cusp. Similarly, we
will see that $h', h'' \rightarrow 0$ when $r\rightarrow +\infty$ and then the solution is asymptotically flat.

To better understand the phase portrait it is useful to consider some isoclinic lines. This will give us the limit values for $H$,
$H'$ and the range of the parameter.
\begin{lema} \label{lemaisocl}
The vertical asymptote for the trajectory $S$ occurs at $H=0$. Furthermore, it is parameterized by $r\in(-\infty,+\infty)$ and
$H,H'\rightarrow 0$ as $r\rightarrow +\infty$.
\end{lema}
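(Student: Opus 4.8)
The plan is to locate the separatrix $S$ precisely using the two nullclines of (\ref{eqsphase}) inside the strip $\{0<H<\frac{1}{2},\,F<0\}$. First I would write the $H$-nullcline $\{H'=0\}$ as $F=2H-\frac{1}{2H}$ and the $F$-nullcline $\{F'=0\}$ as $F=H-\frac{1}{4H}$. Both curves pass through the critical point $(\frac12,0)$ and decrease to $-\infty$ as $H\to 0^{+}$, and since their difference equals $H-\frac{1}{4H}<0$ on $(0,\frac12)$, the $H$-nullcline lies strictly below the $F$-nullcline. This splits the strip into three regions; checking the signs of $H'$ and $F'$ shows that in the region $\mathrm{III}$ below both nullclines one has $H'<0$ and $F'<0$. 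Comparing the slope $3+\sqrt5$ of the repulsive eigendirection at $(\frac12,0)$ with the nullcline slopes $4$ and $2$ there shows that $S$ enters $\mathrm{III}$.

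Next I would verify that $\mathrm{III}\cap\{H>0\}$ is forward invariant: on the $H$-nullcline the field is $(0,F')$ with $F'=2H^{2}-\frac12<0$ (pointing straight down into $\mathrm{III}$), and on $\{H=0\}$ it is $(\frac12,\frac12)$ (pointing into $H>0$), so no trajectory escapes upward or leftward. Hence along $S$ the function $H$ is decreasing and bounded in $(0,\frac12)$, so $H\to H_\infty\ge 0$, while $F$ is decreasing; ruling out a finite limit for $F$ — were $F$ bounded the forward orbit would be bounded, and its $\omega$-limit a nonempty compact invariant set, which Poincaré--Bendixon forces to be an equilibrium, impossible in $\overline{\mathrm{III}}$ with $F<0$ finite — gives $F\to-\infty$, i.e. a vertical asymptote. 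Finally $H_\infty=0$, for if $H_\infty>0$ then $H'=HF-2H^{2}+\frac12\to-\infty$, contradicting $H\ge H_\infty$. This puts the asymptote at $H=0$. For the parameter range I would use that $H$ is now bounded: from $F'=2HF-2H^{2}+\frac12$ and $|H|<\frac12$ one gets $|F'|\le |F|+1$, so Gr\"onwall forbids finite-time blow-up and the orbit extends to all $r>0$; together with the standard fact that the unstable manifold reaches $(\frac12,0)$ as $r\to-\infty$, this gives $r\in(-\infty,+\infty)$.

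The delicate point is showing $H'\to 0$, since a monotone convergent $H$ need not have vanishing derivative. Here I would differentiate once more to obtain $H''=H'(F-2H)+H(2H^{2}-\frac12)$, so that at any critical point of $H'$ one has $H'=\frac{H(\frac12-2H^{2})}{F-2H}$, whose value tends to $0$ because the numerator $\to 0$ while $F-2H\to-\infty$. Hence every local extremum of $H'$ at large $r$ tends to $0$; between consecutive extrema $H'$ is monotone and bounded, so it is squeezed to $0$, while the case of $H'$ eventually monotone is settled by noting that a negative limit would make $H''\to+\infty$, a contradiction. Thus $H'\to 0$, equivalently $HF\to-\frac12$. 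I expect this propagation from the extrema to the full limit to be the main obstacle of the argument, all the preceding steps being routine sign-chasing on the nullclines.
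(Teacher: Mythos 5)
Your proof is correct, and while it shares the paper's overall strategy --- trapping the separatrix between isoclines in the strip $\{0<H<\frac12,\ F<0\}$ --- it differs in the details in ways worth noting. The paper traps $S$ between the horizontal isocline $F=H-\frac{1}{4H}$ and an additional oblique isocline $F=4H-\frac{1}{H}$ (on which the field has constant direction $(1,3)$), and then argues separately that the vertical isocline $F=2H-\frac{1}{2H}$ is itself a barrier; you instead show that the single region below both nullclines is forward invariant, which is leaner and yields the monotonicity of both $H$ and $F$ at once. For the divergence $F\to-\infty$ the paper imports the compactified phase portrait of the preceding lemma (the node at infinity with $H/F\to 0$), whereas your Poincar\'e--Bendixson/monotone-limit argument is self-contained. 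The most substantial difference is the final step: the paper's justification of $H'\to 0$ (``$H$ is positive and decreasing, so $H'$ must tend to $0$'', supplemented by the remark that the trajectory sticks to the vertical isocline) is the weakest point of its proof, since a positive decreasing function need not have derivative tending to zero; your identity $H''=H'(F-2H)+H\bigl(2H^{2}-\frac12\bigr)$, evaluated at the critical points of $H'$ and propagated by monotonicity between consecutive extrema (with the eventually-monotone case handled separately), closes exactly this gap and is an improvement on the published argument. All your computations check out, in particular $F'=2H^{2}-\frac12$ on the $H$-nullcline and the formula for $H''$.
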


\begin{proof}

The vertical isocline $\{H'=0\}$ is the
hyperbola
$$F=2H-\frac{1}{2H}$$
and the trajectories cross it with vertical tangent vector. The horizontal isocline $\{F'=0\}$ is the hyperbola
$$F=\frac{1}{2}\left( 2H-\frac{1}{2H}\right)$$
and the trajectories cross it with horizontal tangent vector (see Figure \ref{retrato_closeup}). An oblique isocline is the hyperbola
$$F=2\left( 2H-\frac{1}{2H}\right) ,$$
since over this curve the vector field has constant direction:
$$(H',F')\big|_{(H,4H-\frac{1}{H})} = \left(2H^2-\frac{1}{2},6H^2-\frac{3}{2}\right) =
\left(2H^2-\frac{1}{2}\right) (1,3) .$$

\begin{figure}[ht]
\centering
\includegraphics[width=0.5\textwidth]{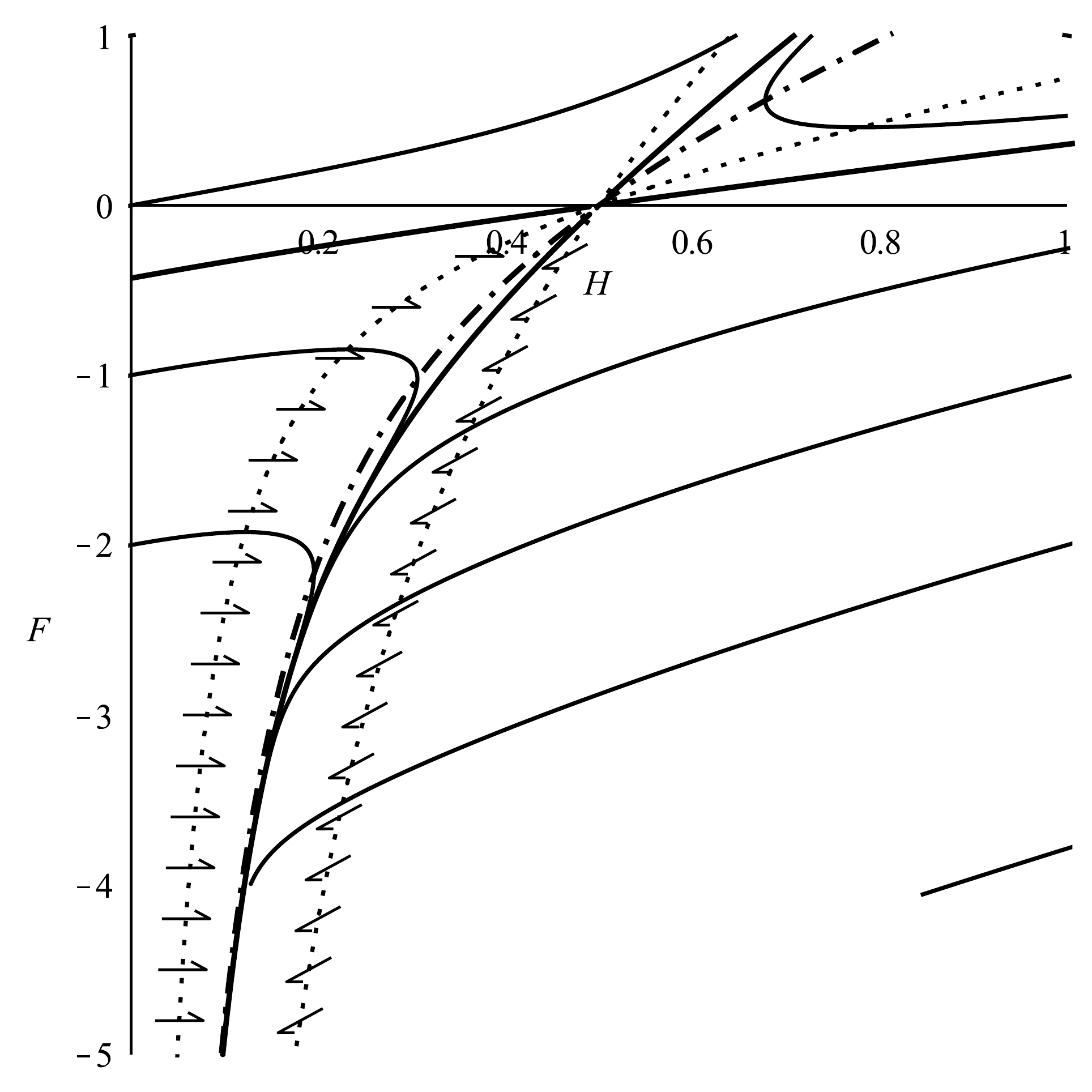}
\caption{Close-up of the separatrix trajectories (bold lines); the vertical isocline (dashes and dots); and the horizontal and oblique
isoclines (dots and arrows).}
\label{retrato_closeup}
\end{figure}

All three isoclines intersect at the critical points. Furthermore, the tangent directions at the critical point $(\frac{1}{2},0)$
have
slope $\frac{dF}{dH}\big |_{H=\frac{1}{2}} = 4$ for the vertical isocline, $\frac{dF}{dH}\big |_{H=\frac{1}{2}} = 2$ for the horizontal
one, and $\frac{dF}{dH}\big |_{H=\frac{1}{2}} = 8$ for the oblique one. The separatrix lines emanating from the critical point follow the
directions given by the eigenvectors of the matrix
of the linearized system (evaluated at the point), that is, the matrix

$$ \left(\begin{array}{cc}
	-2 & \frac{1}{2} \\
	-2 & 1
        \end{array} \right)
$$
whose eigenvalues are $\frac{-1+\sqrt{5}}{2}$ and $\frac{-1-\sqrt{5}}{2}$ with eigenvectors
$$ \left(\begin{array}{c}
        1 \\ 3+\sqrt{5}
        \end{array} \right)
\quad , \quad  
   \left(\begin{array}{c}
        1 \\ 3-\sqrt{5}
        \end{array} \right)
$$
respectivelly, so the separatrix lines have tangent directions with slope $3+\sqrt{5}\simeq  5.24$ and $3-\sqrt{5} \simeq 0.76$
respectively.
The repulsive separatrices are the ones associated with the positive eigenvalue, that is $\frac{-1+\sqrt{5}}{2}$, and the slope is $5.24$.

The repulsive separatrix emanating towards $\{H<\frac{1}{2} , F<0 \}$ initially lies below both the vertical and horizontal
isoclines, so it moves downwards and leftwards; and above the oblique one. The horizontal and oblique isoclines form two barriers for the
separatrix, this is, the separatrix cannot cross any of them. This is obvious for the horizontal one, since the flow is rightwards and the
trajectory is on the right. For the oblique isocline, we just check that any generic point on the isocline $(H,4H-\frac{1}{H})$ has
tangent vector $(1,4+\frac{1}{H^2})$ and a normal vector $\nu = (-4-\frac{1}{H^2},1)$ pointing leftwards and upwards for $0<H<\frac{1}{2}$.
The scalar product of the normal vector $\nu$ and the vector field $(H',F')$ over this isocline is
$$\langle \nu, (H',F')\rangle = \left(2H^2-\frac{1}{2}\right) \left(-4-\frac{1}{H^2}+3\right)=-2H^2+\frac{1}{2H^2}-\frac{3}{2}>0$$
whenever $0<H<\frac{1}{2}$. This means that the flow is always pointing to the left-hand side of the isocline branch and therefore is a
barrier. This proves that the separatrix moves downwards between the two barriers and therefore $H\rightarrow 0$. 

Actually, the vertical isocline is also a barrier for the separatrix. For, if at some point it touched the vertical
isocline, it would then move vertically downwards, keeping the trajectory on the right-hand side of the isocline. There would be then a
tangency, but it is impossible since the tangent vector should be vertical. Since the vertical isocline is becoming itself vertical, this
means that the vertical isocline acts as an atractor for the trajectories. Indeed, the trajectory lies initially in the region
$\{H>0,H'<0\}$ but $H$ must remain positive since it cannot cross the vertical isocline. Therefore $H$ is positive and decreasing, so $H'$
must tend to $0$. This implies that the trajectory tends to the vertical isocline. It is important to
note that both the vertical and horizontal isoclines come close together when $H\rightarrow 0$, but the trajectories stick to the vertical
one much faster than to the horizontal one.

We now see that $r\in(-\infty,+\infty)$. This follows inmediately from the Hartman-Grobman theorem for the case $r\rightarrow -\infty$, but
the trajectory might, a priori, escape to infinity in finite time. This would require that the velocity tangent vector tends to infinity in
finite time, but this is impossible, since $H$ and $H'$ are bounded, thus $F'$ is bounded and hence the tangent vector $(H',F')$ is bounded.
\end{proof}

We have seen that not only $H\rightarrow 0$ as $r\rightarrow +\infty$, but also that $H' \rightarrow 0$. That is,
$h',h''\rightarrow 0$ as $r\rightarrow +\infty$, so all the sectional, scalar and Ricci curvatures tend to zero, the metric becoming
asymptotically flat.

At this point we have seen the existence of the soliton asserted in Theorem \ref{tma_exist}. We now give some more detailed information
about the
asymptotic behaviour of $f$ and $h$ at the ends of the manifold.

\begin{lema} \label{lemaasymp}
The asymptotic behaviour of $f$ and $h$ is
$$ h \sim \frac{r}{2} \quad \mathrm{and} \quad f \rightarrow cst \qquad  \mathrm{as} \quad r\rightarrow -\infty$$
and
$$h\sim \ln r \quad \mathrm{and} \quad f \sim -\frac{r^2}{4} \qquad  \mathrm{as} \quad r\rightarrow +\infty .$$
\end{lema}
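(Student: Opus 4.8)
The plan is to read both asymptotics off the local dynamics of $S$ at its two ends, treating $r\to-\infty$ and $r\to+\infty$ separately.

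For the end $r\to-\infty$, I would use that $S$ is the repulsive (unstable) separatrix of the hyperbolic saddle $(H,F)=(\frac12,0)$, associated with the positive eigenvalue $\lambda_+=\frac{-1+\sqrt5}{2}$. Because $S$ lies on the one-dimensional unstable manifold of a hyperbolic fixed point, the unstable manifold theorem provides an exponential approach: there is $C>0$ with $|(H(r),F(r))-(\tfrac12,0)|\le C e^{\lambda_+ r}$ for $r$ near $-\infty$. From $H=h'\to\frac12$ I obtain $h\sim\frac r2$ by L'H\^opital (both $h$ and $r/2$ tend to $-\infty$, with $\lim 2H=1$). More importantly, $|f'(r)|=|F(r)|\le Ce^{\lambda_+ r}$ is integrable on $(-\infty,r_0]$, so $f(r)=f(r_0)-\int_r^{r_0}F(s)\,ds$ converges to a finite constant. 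This is the crux of the lemma: the bare fact that $F\to0$ would not force $f$ to converge, and it is precisely the exponential rate furnished by hyperbolicity that makes $f'$ integrable.

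For the end $r\to+\infty$, I would first extract the governing limits from the system (\ref{eqsphase}). Lemma \ref{lemaisocl} already supplies $H\to0$ and $H'\to0$ along $S$. Plugging these into the first equation, $HF=H'+2H^2-\frac12\to-\frac12$; feeding $HF\to-\frac12$ into the second equation then gives $F'=2(HF)-2H^2+\frac12\to-\frac12$, consistent with the vertical asymptote $F\to-\infty$. The remaining work is asymptotic integration: since $F'\to-\frac12$ with $F\to-\infty$, Ces\`aro--Stolz (equivalently L'H\^opital applied to $F/r$) yields $F\sim-\frac r2$, and integrating once more gives $f\sim-\frac{r^2}4$. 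Writing $rH=(HF)\cdot\frac rF$ and combining $HF\to-\frac12$ with $F\sim-\frac r2$ gives $rH\to1$, so $h'\sim\frac1r$ and hence $h\sim\ln r$ by a final L'H\^opital (with $h\to+\infty$).

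I expect the only genuinely delicate point to be the convergence of $f$ at $-\infty$, which is what forces me to invoke hyperbolicity of the saddle rather than just the limit $F\to0$. Everything on the $r\to+\infty$ side is routine once the two limits $HF\to-\frac12$ and $F'\to-\frac12$ are secured; the only care needed there is to verify that $F$, $f$ and $h$ each tend to $\pm\infty$, so that the L'H\^opital and Ces\`aro--Stolz steps are applied to honest indeterminate forms.
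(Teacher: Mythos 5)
Your proposal is correct and follows essentially the same route as the paper: both ends are handled separately, with the $r\to+\infty$ asymptotics extracted from the limits $H,H'\to0$, $HF\to-\tfrac12$, $F'\to-\tfrac12$ and l'H\^opital, and with the convergence of $f$ at $-\infty$ resting on exponential decay of $F$ coming from hyperbolicity of the saddle. The only (harmless) difference is the tool used for that decay: you invoke the unstable manifold theorem to get the rate $e^{\lambda_+ r}$ directly, while the paper extracts a H\"older-exponent rate $e^{\alpha r}$ from the Hartman--Grobman conjugacy; either bound suffices for the integrability of $F$ on $(-\infty,r_0]$.
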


\begin{proof}
Recall a version of the l'Hôpital rule: if $$\lim_{x\rightarrow \infty} \phi(x) = \lim_{x\rightarrow \infty} \psi(x) = 0, \pm \infty \quad
\mathrm{and} \quad \lim_{x\rightarrow \infty} \frac{\phi'(x)}{\psi'(x)} = c$$ then $$\lim_{x\rightarrow \infty} \frac{\phi(x)}{\psi(x)} = c.$$

The case when $r\rightarrow -\infty$ follows from Hartman-Grobman theorem: the phase portrait in a small neighbourhood of a saddle critical
point has a flow that is Hölder conjugate to the flow of a standard linear saddle point 
$$\left\{ \begin{array}{l}
   \dot x = x \\
   \dot y = -y ,
  \end{array}\right.
$$
whith solution $x(t) = k_1 e^t$, $y(t)=k_2 e^{-t}$.
This means that $r$ is defined from $-\infty$ onwards, that $H\rightarrow
\frac{1}{2}$ and $F,H',F'\rightarrow 0$ and $\frac{H-1/2}{F} \rightarrow 3+\sqrt{5}$ as $r\rightarrow -\infty$. Since
$H=h'\rightarrow \frac{1}{2}$ then $h\rightarrow -\infty$ and $\frac{H}{1/2}\rightarrow 1$. By l'Hôpital,
$\frac{h}{r/2}\rightarrow 1$.
Using more accurately the Hartman-Grobman theorem, there exists a Hölder function $\eta:U\subset \mathbb R \rightarrow \mathbb R$ defined on
a neighbourhood of zero, and constants $\alpha, C >0$ such that $F(r)=\eta(k_1 e^{r})$ and 
$$ | F(r) - F(r_0) | = |\eta(k_1 e^{r}) - \eta(k_1 e^{r_0}) | \leq C |k_1 e^{r} - k_1 e^{r_0} |^\alpha .$$
When $r_0 \rightarrow -\infty$, we obtain
$$ |F(r)| \leq \tilde C e^{\alpha r} ,$$
thus, $F$ is integrable on an interval $(-\infty,c]$ and thus $f\rightarrow f_0=cst$ as $r\rightarrow -\infty$. The constant $f_0$ is
actually $\sup f$ and can be chosen since it bears no geometric meaning. A more accurate description of $f$ using l'Hôpital tells
$$\lim_{r\rightarrow -\infty} \frac{h-r/2}{f-f_0}=3+\sqrt{5} .$$

For the case when $r \rightarrow +\infty$, we know from the trajectories that $H,H' \rightarrow 0$. Letting $r\rightarrow + \infty$ in the
first equation of (\ref{eqsphase}) we deduce that $HF\rightarrow -\frac{1}{2}$. Using this and letting $r\rightarrow + \infty$ in the second
equation of (\ref{eqsphase}) we conclude that $F'\rightarrow -\frac{1}{2}$, that is, $F' \sim -\frac{1}{2}$ and by l'Hôpital, $F\sim
-\frac{r}{2}$ and $f\sim -\frac{r^2}{4}$ as $r\rightarrow +\infty$.

Now, since $\lim_{r\rightarrow +\infty} \frac{F}{r} = - \frac{1}{2}$, we have
$$-\frac{1}{2} = \lim_{r\rightarrow +\infty} HF = \lim_{r\rightarrow +\infty} \frac{H}{r^{-1}} \frac{F}{r} = -\frac{1}{2} \lim_{r\rightarrow +\infty} \frac{H}{r^{-1}}$$
thus $H\sim \frac{1}{r}$ and therefore $h\sim \ln r$ as $r\rightarrow +\infty$.
\end{proof}

It remains only to check the bounds on the sectional curvatures.

\begin{lema}
The metric (\ref{metrica}) with the function $f$ obtained as solution of the system (\ref{eqsphase}) has bounded sectional curvature
$$-\frac{1}{4}<sec<0 .$$
\end{lema}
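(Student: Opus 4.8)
The plan is to read off the two independent sectional curvatures from Lemma \ref{lema_qtties}, namely $sec_{xy}=-(h')^2=-H^2$ and $sec_{rx}=sec_{ry}=-((h')^2+h'')=-(H^2+H')$, and then to bound the two quantities $H^2$ and $H^2+H'$ along the separatrix $S$. I can already invoke the qualitative information established in Lemma \ref{lemaisocl}: along $S$ one has $0<H<\tfrac12$ for every finite $r$, and $H$ is positive and strictly decreasing, so $H'<0$.

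With this in hand, three of the four required inequalities are immediate. Since $0<H<\tfrac12$ gives $0<H^2<\tfrac14$, I obtain $-\tfrac14<sec_{xy}<0$ at once. For the mixed planes, $H'<0$ yields $H^2+H'<H^2<\tfrac14$, which is exactly $sec_{rx}>-\tfrac14$. So the only bound that requires genuine work is $sec_{rx}<0$, i.e. the strict positivity $H^2+H'>0$.

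To attack this last inequality I would substitute the first ODE $H'=HF-2H^2+\tfrac12$ to rewrite
$$H^2+H'=HF-H^2+\tfrac12,$$
which is positive precisely when $F>H-\tfrac{1}{2H}$ (dividing by $H>0$). I therefore introduce the auxiliary curve $C_0=\{F=H-\tfrac{1}{2H}\}$, the zero locus of $sec_{rx}$, and show that the separatrix stays strictly above it. The key computation is that along $C_0$ one has $HF=H^2-\tfrac12$, so the vector field reduces to $(H',F')=(-H^2,-\tfrac12)$; pairing this with the upward normal $\nu=(-(1+\tfrac{1}{2H^2}),1)$ of $C_0$ gives $\langle\nu,(H',F')\rangle=H^2>0$. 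Hence trajectories cross $C_0$ only upward and the region $\{F>H-\tfrac{1}{2H}\}$ is forward invariant. Since $C_0$ passes through $(\tfrac12,-\tfrac12)$ while $S$ leaves the critical point $(\tfrac12,0)$ with finite slope $3+\sqrt5$, the separatrix lies above $C_0$ near the critical point, and therefore above it for all finite $r$; this gives $H^2+H'>0$ and hence $sec_{rx}<0$.

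The hard part will be exactly this positivity, and the reason it is delicate is asymptotic: from Lemma \ref{lemaasymp} both $F$ and the barrier value $H-\tfrac{1}{2H}$ behave like $-\tfrac r2$ as $r\to+\infty$, so $C_0$ and $S$ become mutually asymptotic and no crude estimate separates them; it is precisely the barrier (one-sided crossing) argument that certifies the strict inequality persists for every finite $r$. Finally I would observe that all four inequalities are strict for finite $r$ and are saturated only in the limits, with $sec\to-\tfrac14$ as $r\to-\infty$ (where $H\to\tfrac12$, $H'\to0$) and $sec\to0$ as $r\to+\infty$ (where $H,H'\to0$), which is consistent with the pinching $-\tfrac14<sec<0$ asserted in Theorem \ref{tma_exist}.
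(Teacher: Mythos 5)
Your proof is correct and follows essentially the same route as the paper: the bound $-\tfrac14<sec_{xy}<0$ from $0<H<\tfrac12$, and a phase-plane barrier argument along the zero locus $\{H^2-HF-\tfrac12=0\}$ of $sec_{rx}$ (you actually carry out the normal-vector computation that the paper only asserts ``can also be checked''). The one small divergence is that for $sec_{rx}>-\tfrac14$ you use $H'<0$ directly, whereas the paper writes $sec_{rx}=-\tfrac12\left(F'+\tfrac12\right)$ and invokes $F'<0$ from the horizontal-isocline barrier; both are valid consequences of Lemma \ref{lemaisocl}.
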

\begin{proof}
The expression for the sectional curvatures is given in Lemma \ref{lema_qtties}. The case 
$$sec_{xy}=-(h')^2 = -H^2$$
is trivial since $0<H<\frac{1}{2}$ and therefore $-\frac{1}{4}<sec_{xy}<0$, tending to $-\frac{1}{4}$ on the cusp end, and to $0$ on the wide end. The other sectional curvatures are
\begin{align*}
sec_{rx}=sec_{ry} & = -((h')^2+h'') \\
		  & = -H^2 - H' \\
		  & = H^2 - HF -\frac{1}{2} \\
		  & = -\frac{1}{2}\left( F'+\frac{1}{2} \right)
\end{align*}
We saw in Lemma \ref{lemaisocl} that $\{F'=0\}=\{2HF-2H^2+\frac{1}{2}=0\}$ is a barrier for the separatrix $S$. Hence, $F'<0$ along $S$
and therefore $sec_{rx}, sec_{ry}>-\frac{1}{4}$. Similarly, the set $\{H^2-HF-\frac{1}{2}=0\}$ can also be checked to be a barrier for
$S$ (actually a barrier on the opposite side), and hence $sec_{rx}, sec_{ry}<0$. We also saw in Lemma \ref{lemaasymp} the asymptotics
of $F'$, therefore we have $-\frac{1}{2}<F'<0$ and $sec_{rx}=sec_{ry}$ tend to $-\frac{1}{4}$ on the cusp end, and to $0$ on the wide end.
\end{proof}
This finishes the description of the soliton stated on Theorem \ref{tma_exist}.

\section{Uniqueness}\label{secuniq}
Let $(M,g,f)$ be a gradient expanding Ricci soliton over $\mathbb R \times \mathbb T^2$ such that $ sec > -1/4$. Then
$$\Ric + \Hess f + \frac{1}{2} g =0 ,$$
$$sec > 1/4,$$
$$\Ric > -1/2,$$
$$R > -3/2 .$$
Recall a basic lemma about solitons, that can be proven just derivating, contracting and commuting covariant derivatives on the soliton
equation, see \cite{TRFTAA1}.
\begin{lema}
It is satisfied
$$ R + \Delta f + 3/2 =0 ,$$
$$ g(\grad R , \cdot ) = 2 \Ric(\grad f , \cdot) ,$$
$$ R + |\grad f |^2 + f = C .$$
\end{lema}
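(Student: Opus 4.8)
The plan is to derive all three identities directly from the soliton equation $\Ric + \Hess f + \frac{1}{2}g = 0$ by tracing, taking divergences, and commuting covariant derivatives, exactly as the remark suggests. I would work in an orthonormal frame / index notation, writing the equation as $R_{ij} + \nabla_i\nabla_j f + \frac{1}{2}g_{ij} = 0$, and I would use the sign convention for $\Ric$ consistent with Lemma \ref{lema_qtties} (negative Ricci in the hyperbolic-type case).

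First I would obtain the scalar identity by taking the metric trace of the soliton equation. Since $\trace \Ric = R$, $\trace \Hess f = \Delta f$, and $\trace \bigl(\tfrac{1}{2}g\bigr) = \tfrac{n}{2} = \tfrac{3}{2}$ in dimension $n=3$, the traced equation reads $R + \Delta f + \frac{3}{2} = 0$, which is the first identity.

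Next, for the second identity, I would apply $\nabla^i$ to the soliton equation. Three ingredients appear: the contracted second Bianchi identity $\nabla^i R_{ij} = \frac{1}{2}\nabla_j R$; the Bochner commutation formula $\nabla^i\nabla_i\nabla_j f = \nabla_j \Delta f + R_{jk}\nabla^k f$ (valid since the Hessian of a function is symmetric); and $\nabla^i g_{ij} = 0$ from metric compatibility. Summing gives $\frac{1}{2}\nabla_j R + \nabla_j \Delta f + R_{jk}\nabla^k f = 0$. Substituting $\Delta f = -R - \frac{3}{2}$ from the first identity, so that $\nabla_j \Delta f = -\nabla_j R$, the terms collapse to $\frac{1}{2}\nabla_j R = R_{jk}\nabla^k f$, i.e. $g(\grad R, \cdot) = 2\,\Ric(\grad f, \cdot)$. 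Finally, for the third identity I would show that the gradient of $\Phi := R + |\grad f|^2 + f$ vanishes. Differentiating, $\nabla_j |\grad f|^2 = 2(\nabla_j\nabla_k f)\nabla^k f$, and replacing the Hessian by $-R_{jk} - \frac{1}{2}g_{jk}$ from the soliton equation yields $\nabla_j|\grad f|^2 = -2R_{jk}\nabla^k f - \nabla_j f$. Adding $\nabla_j R = 2R_{jk}\nabla^k f$ (the second identity) and $\nabla_j f$, everything cancels, so $\nabla_j \Phi = 0$; since $M$ is connected, $\Phi$ is constant.

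I expect no genuine obstacle here, the computation being entirely standard; the only points requiring care are bookkeeping of signs, specifically the curvature term and sign in the commutation formula $\nabla^i\nabla_i\nabla_j f = \nabla_j\Delta f + R_{jk}\nabla^k f$, which must match the Ricci convention of Lemma \ref{lema_qtties}, and the normalization $\nabla^i R_{ij} = \frac{1}{2}\nabla_j R$ of the contracted Bianchi identity. As an independent sanity check, one could substitute the explicit formulas of Lemma \ref{lema_qtties} (with $H = h'$, $F = f'$) and verify that the three identities reduce to algebraic consequences of the system \eqref{eqssolcusp}.
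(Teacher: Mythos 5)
Your proof is correct and follows exactly the route the paper indicates (the paper itself only sketches ``derivating, contracting and commuting covariant derivatives on the soliton equation'' and defers to a reference): trace for the first identity, divergence plus contracted Bianchi and the commutation formula $\Delta\nabla_j f=\nabla_j\Delta f+R_{jk}\nabla^k f$ for the second, and differentiation of $R+|\grad f|^2+f$ combined with the first two for the third. All signs check out against the soliton normalization $\Ric+\Hess f+\tfrac12 g=0$ in dimension three, so there is nothing to add.
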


Since the soliton is defined in terms of the gradient of $f$, we can arbitrarily add a constant to $f$ without effect. We use this to set
$C=-3/2$ above so that we have
$$ \Delta f = |\grad f|^2 + f .$$
The bound on the curvature implies
$$\Hess f < 0 ,$$
$$\Delta f < 0 ,$$
$$ \langle \grad R , \grad f \rangle > -|\grad f|^2 .$$
First equation means that $f$ is a strictly concave function ($-f$ is a strictly convex function), i.e. $-f\circ \gamma$ is a strictly
convex real function for every (unit speed) geodesic
$\gamma$. This is a strong condition, since then the superlevel sets $A_c=\{f \geq c\}$ are totally convex sets, i.e. every geodesic
segment joining two points on $A_c$ lies entirely on $A_c$. Second equation is just a weaker convexity condition. This concavity on
this topology implies that $f$ has no maximum.

\begin{lema}
The function $f$ is negative and has no maximum.
\end{lema}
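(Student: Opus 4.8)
The plan is to split the statement into its two halves, handling negativity algebraically and the absence of a maximum topologically; the latter is where the hypothesis on the topology of $M$ really enters.

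First I would dispose of the negativity in a single line. The identities already in hand give $\Delta f = |\grad f|^2 + f$, while the curvature bound gives $\Delta f < 0$. Hence at every point $f = \Delta f - |\grad f|^2 \leq \Delta f < 0$, so $f$ is everywhere negative.

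For the absence of a maximum I would argue by contradiction, passing to the universal cover to exploit that $\pi_1(M) = \mathbb Z^2$. Suppose $f$ attains its maximum value $M_0$ at some $p \in M$. Let $\pi : \tilde M \to M$ be the universal Riemannian cover with the pulled-back metric; since the soliton is complete, $\tilde M$ is complete. The lift $\tilde f = f \circ \pi$ is again strictly concave, because geodesics of $\tilde M$ project to geodesics of $M$ while preserving the values of $\tilde f$; moreover $\sup_{\tilde M} \tilde f = M_0$, attained exactly on the fiber $\pi^{-1}(p)$. Since the deck group $\mathbb Z^2$ acts freely, this fiber is infinite, so I may pick two distinct lifts $\tilde p_1 \neq \tilde p_2$ of $p$. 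By Hopf--Rinow there is a minimizing geodesic $\gamma : [0,L] \to \tilde M$ joining them, and $\tilde f \circ \gamma$ is then a strictly concave real function with equal endpoint values $M_0$; strict concavity forces $\tilde f(\gamma(s)) > M_0$ for interior $s \in (0,L)$, contradicting $M_0 = \sup_{\tilde M} \tilde f$.

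The main obstacle is conceptual rather than computational. One is tempted to argue directly on $M$ by restricting $f$ to a closed geodesic in a nontrivial homotopy class of the torus, since a strictly concave periodic function cannot exist. However, the cusp end drives the systole of the torus fibers to zero, so those free homotopy classes need not contain any closed geodesic at all, and this direct attempt can genuinely fail. Passing to the universal cover is what repairs it: closed geodesics are replaced by ordinary geodesic segments between distinct lifts of the maximum point, where strict concavity applies with no existence issue. The only standing input this step relies on is completeness of the soliton, needed to invoke Hopf--Rinow.
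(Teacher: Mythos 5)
Your proof is correct and follows essentially the same route as the paper: negativity from $f=\Delta f-|\grad f|^2<0$, and then lifting a hypothetical maximum to the universal cover, where strict concavity of $\tilde f$ along a minimizing geodesic between two distinct lifts yields the contradiction. The extra care you take (Hopf--Rinow, the explicit inequality $\tilde f(\gamma(s))>M_0$ at interior points) only makes explicit what the paper leaves implicit.
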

\begin{proof}
Note that $f$ is bounded above since $f=\Delta f -|\grad f|^2 < 0 $. 
Now suppose by contradiction that the maximum of $f$ is attained at some point of $\mathbb R \times \mathbb T^2$, then we can
lift this point, the metric and the potential function to the universal cover $ \mathbb R \times \mathbb R^2$. There is then a lattice of
points in the cover where the lifted function $\tilde f$ attains its maximum. But this is impossible since a strictly concave function
cannot have more than one maximum (the function restricted to a geodesic segment joining two maxima would not be strictly concave).

\end{proof}

\emph{Remark.} As stated in Proposition \ref{propcao}, if $\Ric > -\frac{1}{2} + \delta$ for any $\delta>0$, then $f$ has a maximum, and the
set
$A=\{f>f_{max} -\mu\}$ is compact and homeomorphic to a ball for small $\mu$. The function $f$ is then an exhaustion function, this is, the
whole manifold retracts onto $A$ via the flowline of $f$ and therefore $M\cong \mathbb R^3$. Thus this stronger bound on the curvature is
not
compatible with $M\cong \mathbb R \times \mathbb T^2$.

Now we prove that level sets of $f$ are compact.
\begin{lema}
The function $f$ is not bounded below and the level sets $\{f=c\}$ are compact.
\end{lema}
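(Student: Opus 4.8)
The plan is to prove the two assertions separately, using throughout that $f$ has no critical points. Indeed, since $f$ is strictly concave, a critical point $p$ would force $(f\circ\gamma)'(0)=0$ and $(f\circ\gamma)''<0$ along every geodesic $\gamma$ from $p$, making $p$ a strict global maximum; but the previous lemma rules out a maximum. Hence $\grad f\neq 0$ everywhere, every value in the image of $f$ is regular, and each level set $\{f=c\}$ is a smooth embedded surface with unit normal $\grad f/|\grad f|$.

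For the unboundedness below I would fix a point $p$ and run the (complete, since $(M,g)$ is complete) geodesic $\gamma$ with $\gamma(0)=p$ and $\gamma'(0)=\grad f(p)/|\grad f(p)|$. As $\gamma$ is a geodesic, $(f\circ\gamma)''(s)=\Hess f(\gamma',\gamma')<0$, so $\varphi:=f\circ\gamma$ is strictly concave on all of $\mathbb R$ with $\varphi'(0)=|\grad f(p)|>0$. Since $\varphi'$ is strictly decreasing it stays $\ge\varphi'(0)>0$ for $s\le 0$, whence $\varphi(s)\le\varphi(0)-\varphi'(0)\,|s|\to-\infty$ as $s\to-\infty$. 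Thus $f$ is not bounded below (consistently, $f$ is already bounded above, as $f=\Delta f-|\grad f|^2<0$).

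For compactness of the level sets the mechanism I would use is topological. Fix a regular value $c$ and set $N=\{f=c\}$; the image of $f$ is the interval $(-\infty,\sup f)$. The idea is to realize a global product structure $M\cong N\times\mathbb R$ by flowing along the normalized gradient $X=\grad f/|\grad f|^2$, which satisfies $X(f)\equiv 1$ and so carries each level set to the next at unit $f$-speed. Granting this, taking fundamental groups gives $\pi_1(N)=\pi_1(M)=\pi_1(\mathbb R\times\mathbb T^2)=\mathbb Z^2$. Now $N$ is a two-sided, hence orientable, surface without boundary; were it noncompact it would be an open surface, which deformation retracts onto a graph and therefore has free fundamental group. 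Since $\mathbb Z^2$ is not free, $N$ must be compact, and a closed orientable surface with $\pi_1=\mathbb Z^2$ is the torus. In particular every level set is compact.

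The step I expect to be the main obstacle is precisely this global product structure, i.e. the \emph{properness} of $f$: one must rule out that a slab $f^{-1}[a,b]$ runs off to infinity, which is exactly what would make the flow of $X$ fail to be complete. The difficulty is that the strict concavity is not uniform: since $\Ric\to-\tfrac12$ on the cusp end (where $sec\to-\tfrac14$), there is no constant $\kappa>0$ with $\Hess f\le-\kappa\,g$, and so a naive ``midpoint of a long chord'' estimate does not by itself force a level set to be bounded. To control this I would use the soliton identity $|\grad f|^2=-\tfrac32-R-f$ together with the bounded-curvature hypothesis: on a compact range $f\in[a,b]$ the scalar curvature $R$ is bounded, so the arc length needed to cross the slab along the $X$-flow, namely $\int_a^b|\grad f|^{-1}\,df$, is finite provided $|\grad f|$ stays bounded away from $0$. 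Establishing this lower bound, equivalently that $R$ does not approach $-\tfrac32-f$ at infinity within the slab, is the crux; I would combine it with the total convexity of the superlevel sets $\{f\ge c\}$ to confine the level set and then extract a compact limit via Hopf--Rinow.
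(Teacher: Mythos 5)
Your treatment of the first assertion is correct and in fact more direct than the paper's: since $f$ has no critical point, $f\circ\gamma$ along the geodesic in the direction of $\grad f$ is strictly concave with positive derivative at $0$, hence drops below any bound as $s\to-\infty$. That part stands on its own.

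The second assertion, however, is where you stop short, and the step you yourself flag as ``the crux'' is precisely the content of the lemma: nothing in your proposal actually shows that a level set is bounded. The topological argument ($M\cong N\times\mathbb R$, so $\pi_1(N)=\mathbb Z^2$, which is not free, so $N$ is closed) is sound \emph{once} the global product structure is in hand, but obtaining it requires properness of $f$, and the route you sketch for that --- bounding $|\grad f|$ away from zero on a slab $f^{-1}[a,b]$ via $|\grad f|^2=-\tfrac32-R-f$ --- would not suffice even if carried out: a lower bound on $|\grad f|$ controls the arc length of individual flow lines of $\grad f/|\grad f|^2$ but does not prevent a single level set from being an unbounded (say, planar or cylindrical) surface running off to infinity inside the slab. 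The missing idea, which is how the paper closes this gap, is a limiting-geodesic argument exploiting the product topology: if $f$ stayed bounded below, say $f(y_i)\to -K>-\infty$, along a sequence $y_i$ escaping to the end opposite to the one where $f(x_i)\to\sup f$, then the minimizing geodesics from $x_i$ to $y_i$ all cross the fixed compact torus $\mathbb T^2\times\{0\}$; compactness of that torus and of the unit sphere bundle over it yields a limit geodesic line $\gamma$ with $f\circ\gamma\to\sup f$ at one end and $f\circ\gamma\geq -K$ at the other, contradicting strict concavity of $f\circ\gamma$ on $\mathbb R$. This gives properness of $f$ on the half $\mathbb T^2\times[0,+\infty)$, hence one compact level component $S$; compactness is then propagated to all other level sets by your flow $\grad f/|\grad f|^2$ together with the observation that $\Hess f<0$ makes the flow \emph{contract} lengths of curves tangent to the level sets in the direction of increasing $f$, so the higher level sets have uniformly bounded diameter. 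I would encourage you to replace your unproved ``confinement'' step by this geodesic-limit argument; without it the proof of compactness is not complete.
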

\begin{proof}
Consider $M=\mathbb T^2\times \mathbb R$ as splitted into $\mathbb T^2 \times (-\infty, 0] \cup \mathbb T^2\times [0,+\infty)$, each
component containing one of the two ends. Since $f$ has no maximum, there is a sequence of points $\{x_i\}$ tending to one end such that
$f(x_i)\rightarrow \sup f$. Let us assume that this end is $\mathbb T^2 \times (-\infty,0]$. Then when approaching the opposite end $f$ is
unbounded. Indeed, suppose by contradiction that there is a sequence of points $\{y_i\}$ tending to the $+\infty$ end such that
$f(y_i)\rightarrow -K
>-\infty$. There is a minimizing geodesic segment $\gamma_i$ joining $x_i$ with $y_i$. This gives us a sequence of geodesic paths (whose
length tends to infinity), each one crossing the central torus $\mathbb T^2\times \{0\}$. Since both the torus and the
space of directions of a point are compact, there is a converging subsequence of crossing points together with direction vectors that
determine a sequence of geodesic segments with limit a geodesic line $\gamma$. Now we look
at $f$ restricted to $\gamma$, this is $f\circ \gamma : \mathbb R \rightarrow \mathbb R$ such that $f\circ \gamma (t) \rightarrow \sup f$ as
$t\rightarrow -\infty$ and  $f\circ \gamma (t) \rightarrow -K > -\infty$ as $t\rightarrow +\infty$. But this is impossible since $f\circ
\gamma$
must be strictly concave. This proves that $f$ is not bounded below and that $f$ is proper when restricted to $\mathbb T^2 \times
[0,+\infty)$.

Now we consider $C_1=\min_{\mathbb T^2 \times \{0\}} f$ and $C_2<C_1<0$ such that the level set $\{f=C_2\}$ has at least one connected
component $S$ in $\mathbb T^2 \times (0,+\infty)$. Then $S$ is closed and bounded since no sequence of points with bounded $f$ can escape
to infinity. Therefore $S$ is compact. More explicitly, all level sets $\{f=C_3\}$ with $C_3<C_2$ contained in $\mathbb T^2 \times
(0,+\infty)$ are compact.

Now we push the level set $S$ to all other level sets by following the flowline $\varphi (x,t)$ of the vector field $\frac{\grad f}{|\grad
f|^2}$. Firstly, the diffeomorphism $\varphi(\cdot,t)$ brings the level set $S \subseteq \{f=C_2\}$ to the level set $\{f=C_2+t\}$,
\begin{align*}
f( \varphi (x,t)) &= f(\varphi(x,0)) + \int_0^t \frac{d}{ds} f(\varphi(x,s)) \ ds = f(x) + \int_0^t \langle \grad f
,\frac{d}{ds}\varphi(x,s) \rangle \ ds  \\
&= f(x) + \int_0^t \langle \grad f , \frac{\grad f }{|\grad f|^2} \rangle \ ds = f(x) + t .
\end{align*}
Secondly, the diameter distorsion between these two level sets is bounded. If $\gamma :[0,1]\rightarrow \{0\}\times \mathbb T^2$ is a
curve on a torus,
\begin{align*}
g_{\varphi_t(x)}(\dot \gamma , \dot \gamma) &= |\dot \gamma|_x^2 + \int_0^t \frac{d}{ds} g_{\varphi_s(x)} (\dot\gamma , \dot\gamma) \ ds \\
&= |\dot \gamma|_x^2 + \int_0^t \mathcal L_{\frac{\grad f}{|\grad f|^2}} (\dot\gamma , \dot\gamma) \ ds \\
&= |\dot \gamma|_x^2 + \int_0^t \frac{2 \Hess f (\dot\gamma , \dot\gamma)}{|\grad f|^2} \ ds .
\end{align*}
Since $\Hess f <0$, this implies that $|\dot \gamma|_{\varphi_t(x)}^2 < |\dot \gamma|_x^2$ so all level sets $\{f=C_4\}$ with $C_4>C_2$
have bounded diameter, and hence are compact and diffeomorphic to $S$.
\end{proof}

Now, the level sets of $f$ are all of them compact and diffeomorphic, thus $M \cong \mathbb R \times \{f=c\} \cong \mathbb
R\times \mathbb T^2$ and therefore the level sets of $f$ are tori. This allows us to set up a coordinate system $(r,x,y)\in \mathbb R
\times \mathbb S^1 \times \mathbb S^1$ such that the potential function $f$ depends only on the $r$-coordinate. Furthermore, the gradient
of $f$ is orthogonal to its level sets, so the metric can be chosen not to contain terms on $dr\otimes dx$ nor $dr\otimes dy$. Thus the
metric can be written $g=u^2 dr^2 + \tilde g$ where $u=u(r,x,y)$ and $\tilde g$ is a family of metrics on the torus with
coordinates $(x,y)$ parameterized by $r$. Using isothermal coordinates, every metric on $\mathbb T^2$ is (globally) conformally
equivalent to the euclidean one, thus $\tilde g = e^{2h}(dx^2 + dy^2)$ where $h=h(r,x,y)$. These conditions allow us to perform computations
that reduce to the particular case we studied in Section \ref{secexist}.

\begin{lema}
Consider the metric over $\mathbb R \times \mathbb T^2$
$$g=u^2 dr^2 + e^{2h} (dx^2 + dy^2)$$
where $u=u(r,x,y)$, $h=h(r,x,y)$, and a function $f=f(r)$. Assume that $g$, $f$ satisfy the soliton equation (\ref{solitoneq}) and that
$g$ has bounded nonconstant curvature. Then $g$ and $f$ are the ones described on the cusped soliton example of Section \ref{secexist}.
\end{lema}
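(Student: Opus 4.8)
The plan is to exploit that $f=f(r)$ together with the compactness of the tori $T_c=\{r=c\}$ (established above) to force the metric to be $(x,y)$-independent, and then to invoke the phase-portrait uniqueness of Section \ref{secexist}. First I would record the consequences of $f=f(r)$. Here $\grad f=\frac{f'}{u^2}\partial_r=\frac{f'}{u}N$ is normal to the tori, where $N=\frac1u\partial_r$ is the unit normal; since $f$ is strictly concave with no critical point (it has no maximum, and a concave critical point is a maximum), we have $f'\neq0$ on every torus. Computing $\Hess f$ from $f=f(r)$, the Christoffel symbol $\Gamma^r_{xy}$ vanishes, so the off-diagonal soliton equations reduce to $\Ric_{xy}=0$ and $\Ric_{rx}=\frac{u_x}{u}f'$, $\Ric_{ry}=\frac{u_y}{u}f'$. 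Moreover the soliton identity $R+|\grad f|^2+f=-3/2$ gives $R=-\tfrac32-f-\frac{(f')^2}{u^2}$, so on each $T_c$ (where $f,f'$ are constant and $f'\neq0$) the scalar curvature and $u$ determine one another: $R$ is constant on $T_c$ if and only if $u$ is. Finally, a direct computation shows each $T_c$ is totally umbilic, $II=\frac{h_r}{u}\,g|_{TT}$, whence $\Hess f|_{TT}=\frac{f'}{u}II=\frac{f'h_r}{u^2}\,g|_{TT}$ is pure trace; the tangential soliton equation then forces $\Ric|_{TT}$ to be a multiple of the induced metric.

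The heart of the argument, and the step I expect to be the main obstacle, is showing that $u$ is independent of $(x,y)$. The vanishing of the trace-free part of $\Ric|_{TT}$ amounts to $\Ric_{xy}=0$ together with $\Ric_{xx}=\Ric_{yy}$, which I would compute out as an elliptic system for $u$ on the closed torus $T_c$; for instance $\Ric_{xy}=0$ reads $u_{xy}=u_xh_y+u_yh_x$. Combining this system with the monotone correspondence $R=-\tfrac32-f-\frac{(f')^2}{u^2}$ between $R$ and $u$ on each fiber, I would run a maximum-principle argument on the compact torus: $u$ can attain neither an interior maximum nor minimum on $T_c$ unless it is constant there, forcing $u_x=u_y=0$. (This is the delicate point, since one must rule out genuine $(x,y)$-oscillation of $u$, and it is exactly here that compactness of the fibers is indispensable.) Once $u=u(r)$, I would reparameterize the normal coordinate by arc length, $\tilde r=\int u\,dr$ — now legitimate — to arrange $u\equiv1$.

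With $u\equiv1$ one has $\Ric_{rx}=-h_{rx}$, so the equations $\Ric_{rx}=\Ric_{ry}=0$ give $h_{rx}=h_{ry}=0$ and hence $h=\phi(r)+\psi(x,y)$. Now $R=R(r)$ by the first paragraph, while for this metric $R=-4\phi''-6\phi'^2-2e^{-2\phi}e^{-2\psi}(\psi_{xx}+\psi_{yy})$, so $e^{-2\psi}(\psi_{xx}+\psi_{yy})$ must be constant; equivalently the cross-section $e^{2\psi}(dx^2+dy^2)$ has constant Gauss curvature. By Gauss--Bonnet on $\mathbb T^2$ that constant is $0$, so $\psi$ is harmonic on the compact torus, hence constant, and $h=h(r)$ after absorbing the constant.

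At this point the metric is $g=dr^2+e^{2h(r)}(dx^2+dy^2)$ with $f=f(r)$, which is precisely the ansatz of Section \ref{secexist}. The curvature being nonconstant excludes the critical-point solutions (the hyperbolic cusp), so by the phase-portrait analysis there is, up to the central symmetry $(H,F,r)\mapsto(-H,-F,-r)$, a unique bounded-curvature trajectory $S$; this identifies $(M,g,f)$ with the cusped soliton of Theorem \ref{tma_exist}, completing the proof. The only genuinely hard ingredient is the fiberwise symmetrization of $u$ in the second paragraph; everything downstream is either a short reparameterization, the Liouville/Gauss--Bonnet observation, or an appeal to the uniqueness already proved for the symmetric ansatz.
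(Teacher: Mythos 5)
Your overall strategy tracks the paper's: reduce to the two tangential equations coming from $\Ric_{xy}=0$ and $\Ric_{xx}=\Ric_{yy}$ (your $u_{xy}=u_xh_y+u_yh_x$ is exactly the paper's first such equation), conclude $u=u(r)$, reparameterize to $u\equiv1$, get $h_{rx}=h_{ry}=0$ from the mixed components, kill the $(x,y)$-dependence of $h$ by constancy of the fiber Gauss curvature plus Gauss--Bonnet, and finish with the phase-portrait uniqueness. All of that is fine. But the step you yourself flag as ``the main obstacle'' --- constancy of $u$ on each torus --- is a genuine gap, and the mechanism you propose for it does not work. The two available equations prescribe only the \emph{trace-free} part of the Euclidean Hessian of $u$ in terms of $\nabla u$ and $\nabla h$; there is no equation for $u_{xx}+u_{yy}$, so $u$ does not satisfy an elliptic scalar equation on $T_c$ to which a strong maximum principle could be applied. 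Moreover, on a closed torus every smooth function attains its maximum and minimum, so the statement ``$u$ can attain neither an interior maximum nor minimum unless constant'' is precisely the conclusion you need, not a contradiction you can extract: at a critical point the equations only tell you $\Hess u=\lambda I$, which is perfectly consistent with $\lambda\le0$ at a maximum and $\lambda\ge0$ at a minimum. The detour through $R=-\tfrac32-f-(f')^2/u^2$ merely transfers the problem to showing $R$ is constant on fibers, which is no easier.

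The paper closes this step with a different and essentially topological--analytic argument: since at every critical point of $u|_{T_c}$ the Hessian is a multiple of the identity, a nondegenerate critical point can only be a local max or min (Morse index $0$ or $2$); Morse theory on $\mathbb T^2$ then forbids $u$ from being a Morse function unless it is constant, so there is a critical point $(x_0,y_0)$ where the first \emph{and} second derivatives vanish. Differentiating the two equations inductively shows all higher derivatives of $u$ vanish at $(x_0,y_0)$, and analyticity of Ricci-flow solutions forces $u$ to be constant in $(x,y)$. You would need to supply an argument of comparable strength (the degenerate-critical-point-plus-analyticity route, or, say, a Poincar\'e--Hopf/generalized-analytic-function argument applied to $u_x+iu_y$, which satisfies a Beltrami-type equation); as written, the fiberwise symmetrization of $u$ is asserted rather than proved. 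Everything downstream of that point in your proposal is correct and matches the paper.
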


\begin{proof}
The same riemannian computations as before lead us to the soliton equation

\begin{align*}
0 &= \Ric + \Hess f +\frac{\epsilon}{2}g \\
&= \textstyle \frac{1}{u} E_{11} \ dr^2 + \frac{1}{u^3}e^{2h} E_{22} \ dx^2 + \frac{1}{u^3}e^{2h} E_{33} \ dy^2 +
\frac{1}{u} E_{12} \ dr \ dx +\frac{1}{u} E_{13} \ dr \ dy +\frac{1}{u} E_{23} \ dx \ dy
\end{align*}

where

\begin{align*} 
E_{11} =& \textstyle
	 -u^2 \left( \frac{\partial^2 u}{\partial x^2} + \frac{\partial^2 u}{\partial y^2}\right) e^{-2h} +
	 \frac{\epsilon}{2}u^3 + f'' u -2 \left(\frac{\partial h}{\partial r}\right)^2 u -2 \frac{\partial^2 h}{\partial r^2} u +
	 \frac{\partial u}{\partial r}\left( 2\frac{\partial h}{\partial r} -f'\right) ,
\\
E_{22} =& \textstyle
	 -u^3\left(\frac{\partial^2 h}{\partial x^2} + \frac{\partial^2 h}{\partial y^2} \right) e^{-2h} 
	 -u^2\left(\frac{\partial^2 u}{\partial x^2} + \frac{\partial u }{\partial y } \frac{\partial h}{\partial y} - \frac{\partial h}{\partial x} \frac{\partial u}{\partial x} \right) e^{-2h} \\
	  & \textstyle
	 + \frac{\epsilon}{2}u^3 + \frac{\partial h}{\partial r} f' u -2 \left(\frac{\partial h}{\partial r}\right)^2 u - \frac{\partial^2 h}{\partial r^2} u 
	 + \frac{\partial u}{\partial r} \frac{\partial h}{\partial r} ,
\\
E_{33} =& \textstyle
	 -u^3\left(\frac{\partial^2 h}{\partial x^2} + \frac{\partial^2 h}{\partial y^2} \right) e^{-2h} 
	 -u^2\left(\frac{\partial^2 u}{\partial y^2} + \frac{\partial u }{\partial x } \frac{\partial h}{\partial x} - \frac{\partial h}{\partial y} \frac{\partial u}{\partial y} \right) e^{-2h} \\
	  & \textstyle
	 + \frac{\epsilon}{2}u^3 + \frac{\partial h}{\partial r} f' u -2 \left(\frac{\partial h}{\partial r}\right)^2 u - \frac{\partial^2 h}{\partial r^2} u 
	 + \frac{\partial u}{\partial r} \frac{\partial h}{\partial r} ,
\\
E_{12} =& \textstyle
	 \frac{\partial u}{\partial x} \left( \frac{\partial h}{\partial r} -f' \right) - u \frac{\partial^2 h}{\partial x \partial r} ,
\\
E_{13} =& \textstyle
	 \frac{\partial u}{\partial y} \left( \frac{\partial h}{\partial r} -f' \right) - u \frac{\partial^2 h}{\partial y \partial r} ,
\\
E_{23} =& \textstyle
	 \frac{\partial u}{\partial y}\frac{\partial h}{\partial x} + \frac{\partial u}{\partial x}\frac{\partial h}{\partial y} 
	 - \frac{\partial^2 u}{\partial x \partial y} .
\end{align*} 
Since the function $u$ never vanishes, nor the exponential does, the soliton equation is the PDE system
$\{E_{11}=E_{22}=E_{33}=E_{12}=E_{13}=E_{23}=0\}$. It is convenient to substitute the equations $E_{22}=0$ and $E_{33}=0$ with the linearly
equivalent $-\frac{1}{2}(E_{22}+E_{33})=0$ (equation (\ref{eq23a}) below) and $E_{22}-E_{33}=0$ (equation (\ref{eq23b})). Then, we get the
system
\begin{align} 
&	 -u^2 e^{-2h} \bigtriangleup u +
	 \frac{\epsilon}{2}u^3 + f'' u -2 \left(\frac{\partial h}{\partial r}\right)^2 u -2 \frac{\partial^2 h}{\partial r^2} u +
	 \frac{\partial u}{\partial r}\left( 2\frac{\partial h}{\partial r} -f'\right) =0 \label{eq1} ,
\\
&	 -u^3 e^{-2h} \bigtriangleup h -\frac{1}{2} u^2 e^{-2h} \bigtriangleup u 
	 + \frac{\epsilon}{2}u^3 + \frac{\partial h}{\partial r} f' u -2 \left(\frac{\partial h}{\partial r}\right)^2 u - \frac{\partial^2 h}{\partial r^2} u 
	 + \frac{\partial u}{\partial r} \frac{\partial h}{\partial r} =0 \label{eq23a} ,
\\
&	\frac{\partial u}{\partial x} \left( \frac{\partial h}{\partial r} -f' \right) - u \frac{\partial^2 h}{\partial x 		
\partial r} =0 \label{eq12} ,
\\
&	\frac{\partial u}{\partial y} \left( \frac{\partial h}{\partial r} -f' \right) - u \frac{\partial^2 h}{\partial y \partial 	r}
=0 \label{eq13} ,
\\
&	\frac{\partial u}{\partial y}\frac{\partial h}{\partial x} + \frac{\partial u}{\partial x}\frac{\partial h}{\partial y} 
	 - \frac{\partial^2 u}{\partial x \partial y} =0 \label{eq23} ,
\\
&	 2 \frac{\partial u}{\partial x}\frac{\partial h}{\partial x} -2 \frac{\partial u}{\partial y}\frac{\partial h}{\partial 	
y}  + \frac{\partial^2 u}{\partial x^2} - \frac{\partial^2 u}{\partial y^2} =0 \label{eq23b} ,
\end{align}
where $\bigtriangleup = \frac{\partial^2}{\partial x^2} + \frac{\partial^2}{\partial y^2} $ is the euclidean laplacian on the $xy$-surface.
We will recover our cusped soliton proving that $u\equiv 1$ and that $h(r,x,y)$ actually does not depend on $(x,y)$.

We consider first the equations (\ref{eq23}) and (\ref{eq23b}).
Since no derivatives on $r$ are present, we can consider the problem for $r$ fixed, so $u =u(r,\cdot,\cdot)$ is a function on the $xy$-torus with metric $e^{2h(r,\cdot,\cdot)}(dx^2+dy^2)$. The function $u$ must have extrema over the torus, since it is compact, so there are some critical points $(x_i,y_i)$ such that $\frac{\partial u}{\partial x}\big|_{(x_i,y_i)} = \frac{\partial u}{\partial x}\big|_{(x_i,y_i)}=0$. From the equations evaluated on a critical point, $\frac{\partial^2 u}{\partial x \partial y}\big|_{(x_i,y_i)}=0$ and $\frac{\partial^2 u}{\partial x^2}\big|_{(x_i,y_i)} =\frac{\partial^2 u}{\partial y^2}\big|_{(x_i,y_i)} = \lambda_i$ so the Hessian matrix (on the $xy$-plane) is 
$$\left(\begin{array}{cc} \lambda_i & 0 \\ 0 & \lambda_i  \end{array}\right) .$$
Suppose that every critical point is nondegenerate, that is, the Hessian matrix is nonsingular with $\lambda_i\neq 0$. Then the set of
critical points is discrete and $u$ is a Morse function for the torus. But then the Morse index on every critical point (the number of
negative eigenvalues of the Hessian) is either $0$ or $2$, meaning that every critical point is either a minimum or a maximum, never a
saddle point. Then Morse theory implies that the topology of the $xy$-surface cannot be a torus (being actually a sphere, see \cite{Mil}).
This
contradicts that every critical point is nondegenerate, so there is some point $(x_0,y_0)$ such that first and second derivatives vanish. 

We now proceed to derivate the two equations. Equations (\ref{eq23}) and (\ref{eq23b}) can be written
\begin{align}
u_{xy} &= u_y h_x + u_x h_y    \label{eq23_p} ,\\
u_{xx} - u_{yy} &= -2 u_x h_x +2 u_y h_y   \label{eq23b_p} ,
\end{align}
using subscripts for denoting partial derivation. Their derivatives are
\begin{align*}
u_{xxy} &= u_{xy}h_x + u_y h_{xx} + u_{xx}h_y + u_x h_{xy} ,\\
u_{xyy} &= u_{yy}h_x + u_y h_{xy} + u_{xy}h_y + u_x h_{yy} ,\\
u_{xxx} - u_{xyy} &= -2 u_{xx}h_x -2 u_x h_{xx} +2 u_{xy} h_y +2 u_y h_{xy} ,\\
u_{xxy} - u_{yyy} &= -2 u_{xy}h_x -2 u_x h_{xy} +2 u_{yy} h_y +2 u_y h_{yy} ,
\end{align*}
using the same notation. Evaluated at the point $(x_0,y_0)$, where all first and second order derivatives of $u$ vanish, the right-hand side
of these equations vanish and therefore all third derivatives vanish. Inductivelly, if all $n$-th order derivatives vanish at $(x_0,y_0)$,
then the $(n-1)$-th derivative of the equation (\ref{eq23_p}) implies that all mixed $(n+1)$-th order derivatives (derivating at least once
in each variable) vanish, then the $(n-1)$-th derivative of the equation (\ref{eq23b_p}) implies that all pure $(n+1)$-th order derivatives
(derivating only in one variable) also do; so all derivatives of all orders of $u$ vanish at $(x_0,y_0)$. Because $u(r,x,y)$ is a component
of a solution of the Ricci flow, it is an analytical function (see \cite{TRFTAA2}, Ch. 13), so it must be identically constant in $(x,y)$.

At this point, we can reduce our metric to be $g=u(r)^2 dr^2 + e^{2h}(dx^2+dy^2)$ with $h=h(r,x,y)$. It is just a matter of reparameterizing the variable $r$ to get a new variable, $\bar r = \int u(r) \ dr$, such that $u(r)^2dr^2 = d\bar r^2$, so we rename $\bar r$ as $r$ and we can assume that the metric is $g=dr^2 + e^{2h}(dx^2+dy^2)$ with $h=h(r,x,y)$. 

We now look at the equations (\ref{eq12}) and (\ref{eq13}) when $u\equiv 1$, they imply
$$\frac{\partial^2 h}{\partial x \partial r} = \frac{\partial^2 h}{\partial y \partial r} = 0 ,$$
meaning that $\frac{\partial h}{\partial r}$ does not depend on $x$, $y$. 
Finally, looking at equations (\ref{eq1}), (\ref{eq23a}) when $u\equiv 1$, we get
\begin{align*}
\frac{\epsilon}{2} + f'' -2 \left(\frac{\partial h}{\partial r}\right)^2 -2 \frac{\partial^2 h}{\partial r^2} &=0 
,\\ 
\frac{\epsilon}{2} + \frac{\partial h}{\partial r} f' -2 \left(\frac{\partial h}{\partial r}\right)^2 - \frac{\partial^2 h}{\partial r^2} &=
e^{-2h} \bigtriangleup h .
\end{align*}
Since the left-hand side does not depend on $(x,y)$, nor does the term $e^{-2h} \bigtriangleup h$. Recall that a two-dimensional metric
written as $e^{2h(x,y)}(dx^2+dy^2)$ has gaussian curvature $K=-e^{-2h} \bigtriangleup h$. So the $xy$-tori have each one constant curvature,
and the only admitted one for a torus is $K=0$. Hence $h$ only depends on $r$ and the equations
turn into the system (\ref{eqssolcusp}), that we already studied for the example of the cusp soliton. The rest of the uniqueness follows
from the discussion on Section \ref{secexist}.
\end{proof}

\section{Evolution of curvature} \label{seccurv}

On this last section we expose the property anounced in Theorem \ref{tma_evolcur}, derivated from the opposite effects of the
diffeomorphism and the homothety for the evolution of the metric. Recall that $(M,g(t))$ is the (soliton) Ricci flow defined on $M=\mathbb R
\times \mathbb T^2 $ and for $t\in(-1,+\infty)$, such that $g(0)=g_0$  where $g_0$ is the metric constructed in Theorem \ref{tma_exist}, and
let us denote $R=R(t)$ the scalar curvature of $g(t)$. We want to show that the growth of the curvature along $(M,g(t))$ changes
sign for values of $t$ far enough of $-1$, but is positive everywhere along the manifold for values of $t$ close enough to $-1$.

\begin{proof}[Proof (of Theorem \ref{tma_evolcur}).]
The evolution of the soliton metric under the Ricci flow is
$$g(t)=(t+1)\phi_t^*(g_0)$$
where $g_0$ is the metric constructed in Theorem \ref{tma_exist} and $\phi_t(r_0,x_0,y_0)=(r(t),x_0,y_0)$ with 
$$\left\{\begin{array}{rcl}\dot r(t) &=& f'(r(t)) = F(r(t)) \\ r(0) &=& r_0 . \end{array} \right. $$
Then, since $R[g_0]=-4H'-6H^2$ (by Lemma \ref{lema_qtties}),
$$R[g(t)]_{(r_0,x_0,y_0)}= \frac{1}{t+1} R[g_0]\big|_{(r(t),x_0,y_0)} = \frac{1}{t+1}(-4H'-6H^2) \big|_{r=r(t)} ,$$
so
\begin{align*}
 \frac{d}{dt}R[g(t)]_{(r_0,x_0,y_0)} &= \frac{-1}{(t+1)^2}(-4H'-6H^2) + \frac{1}{t+1}(-4H''F-12HH'F) \big|_{r=r(t)} \\
&=\frac{2}{(t+1)^2}\left[ (2HF-H^2+1) + (t+1) F^2 (-2HF+2H^2-1) \right]\big|_{r=r(t)} .
\end{align*}

Thus, the zeroset $\{\frac{d}{dt}R[g(t)]=0\}$ defines, for each $t$, an algebraic curve on the $HF$-plane. If the solution curve $S$
intersects this zeroset curve, then
the soliton changes the growth sign of the curvature at some point. Otherwise $R$ is everywhere monotone.

For the rest of the proof, we rename the variables so our system (\ref{eqsphase}) is
\begin{equation}
\left\{ \begin{array}{rcl}
          \dot x & = & xy -2 x^2 + \frac{1}{2} \\
          \dot y & = & 2xy -2 x^2 + \frac{1}{2} 	 
         \end{array} \label{eqsphasexy}
\right. 
\end{equation}
where $x=x(r)$, $y=y(r)$, the curve $S$ is the separatrix solution of the system (\ref{eqsphasexy}) emanating from the critical point
$(\frac{1}{2},0)$ towards the vertical asymptote $x=0$, and
\begin{equation} 
 C_t = (2xy-x^2+1) + (t+1) y^2 (-2xy+2x^2-1) . \label{Cxyori}
\end{equation}
The question is whether $\{C_t=0\}$ intersects $S$, for each $t\in (-1,+\infty)$.
Figure \ref{imgblows_a} represents the solution curve $S$ together with the curve $C_t$ for $t=10$. This gives some
evidence that for big values of $t$ there is an intersection point of the curves, but it is not clear for small or negative values of $t$.
The issue is that $C_t$ has an asymptote and whether it approaches the infinity at the right hand side or at the left hand side of
the curve $S$.
In order to study these guesses, we perform again a projective change of variables, equivalent to assume that our phase
portrait lies on the $\{z=1\}$ plane (with coordinates $(x,y,1)$) of the $xyz$-space, and we project perspectively from the origin to the $\{y=-1\}$ plane
(with coordinates $(\tilde x, -1, \tilde y)$). 
$$
\left\{ \begin{array}{rcl} 
         \tilde x &=& -\frac{x}{y} \\
	 \tilde y &=& -\frac{1}{y}
        \end{array}
\right.
\quad , \quad
\left\{ \begin{array}{rcl}
         x &=& \frac{\tilde x}{\tilde y} \\
	 y &=& -\frac{1}{\tilde y}
        \end{array}
\right. .
$$

This change of coordinates has the effect of
bringing the point at the infinity on the vertical asymptote to the new origin of coordinates, the old line at infinity to the horizontal
axis, and
the old (projective) line $y=0$ to the new line of the infinity. We won't keep track of the tilde notation and use again $x$, $y$ as
coordinates.

After this change, the system turns into

\begin{equation}
\left\{ \begin{array}{rcl}
          \dot x & = & \frac{-8x^2-4x^3 + xy^2 -2x+y^2}{2y} \\
          \dot y & = & -2x-2x^2+\frac{1}{2}y^2	 
         \end{array} \label{eqsphasexyinf}
\right. 
\end{equation}
that is equivalent (has the same orbits) to the system
\begin{equation}
\left\{ \begin{array}{rcl}
          \dot x & = & -4x^2-2x^3 + \frac{1}{2}xy^2 -x+\frac{1}{2}y^2 \\
          \dot y & = & -2xy-2x^2y+\frac{1}{2}y^3	 
         \end{array} \label{eqsphasexyinf2}
\right. ,
\end{equation}
and the curve $C_t$ turns into
$$ \frac{-2xy^2-x^2y^2+y^4+(t+1)(2x+2x^2-y^2)}{y^4}$$
that has the same zeroset as
\begin{equation}
  C_t = -2xy^2-x^2y^2+y^4+(t+1)(2x+2x^2-y^2) . \label{Cxyinf}
\end{equation}
See Figure \ref{imgblows_b}. In particular, we can check that $C_t$ now passes through the origin, and this confirms that the original $C_t$
in (\ref{Cxyori}) had a vertical asymptote. Nevertheless, it is not yet clear which curve lies at which side near the contact point. In
order to
investigate this behaviour, we
perform some algebraic blow-ups at the contact point. Recall that an algebraic blow-up is a change of variables from the old $(x,y)$
to the new ($\tilde x, \tilde y)$  given by
$$
\left\{ \begin{array}{rcl} 
         \tilde x &=& \frac{x}{y} \\
	 \tilde y &=& y
        \end{array}
\right.
\quad , \quad
\left\{ \begin{array}{rcl}
         x &=& \tilde x \tilde y \\
	 y &=& \tilde y
        \end{array}
\right. .
$$
The mapping $\varphi : (\tilde x, \tilde y) \mapsto (x,y)$ is a birrational map, which restricts to a diffeomorphism in all points except at
$(x,y)=(0,0)$, where $\varphi^{-1}((0,0)) = \{\tilde y=0\}$ is a (projective) line called the exceptional divisor of the blow-up. The
exceptional divisor is in correspondence with the space of directions of the old origin, thus we ``pick out a point and substitute it with a
projective line''. Again, we won't keep track of the tildes. Two curves intersecting with
normal crossing at the origin are transformed in this way to two separated curves; two curves tangent at the origin, when transformed,
still intersect, but their contact order is decreased. Since all the curves we are involved with are analytical, after a finite number of
blow-ups the process finishes separating the curves. With the exception of the multiply blown-up line $\{y=0\}$, all the remaining phase
portrait is diffeomorphic to the original one, so any intersecting point other than the origin will still be present in the blown-up
portrait.

The process can be algorithmically carried on. We consider the vector field of the system (\ref{eqsphasexyinf2}). The solution $S$
intersects the $\{y=0\}$ axis at the (one) critical point of the vector field, that can be symbolically computed. We consider also the curve
$C_t$ in (\ref{Cxyinf}) after the chart change. This is a polynomial in $x,y$ and its intersection with $\{y=0\}$ can also be computed
symbolically. We
perform the change of variables corresponding to the blow-up, and ocasionally translate the new intersection point to the origin again. We
compute both intersection points with $\{y=0\}$ and iterate up to when the two results disagree. Let us remark that this process can be
carried out by a symbolic algorithm, so there is no numerical approximation involved. See Figure \ref{imgblows} for a numerical
visualization.

Once this is done, we find that after six blow-ups the critical point of the system is located at $(0,0)$, and the intersection of $C_t$
with the $\{y=0\}$ line is at $(\frac{1}{8} \frac{t}{t+1},0)$. Thus, generically the curve $C_t$ and the solution curve $S$ have order of
contact five at the infinity in the original phase portrait of (\ref{eqsphasexy}). In the case $t=0$, both points of intersection agree, so
further blow-ups are
needed. It turns out that the tenth blow-up separates the points, and when locating the critical point at $(0,0)$, the intersection of $C_t$
with $\{y=0\}$ is at $(\frac{1}{8},0)$. Thus the curve $C_0$ has order of contact nine with the solution $S$ at the infinity in the original
phase portrait of (\ref{eqsphasexy}).

Now we recall a couple of properties of the blow-ups: firstly, curves crossing the origin with a slope $\lambda$ are blown-up to curves
crossing the  $\{y=0\}$ line at $x=\frac{1}{\lambda}$, in particular positive slopes are sent to the $x>0$ half-line, and negative slopes to
the $x<0$ half-line. Secondly, the blow-up preserves orientation of horizontal lines in the upper half-plane, and reverses it in the lower
one. 

The change of charts we performed before the blow-ups preserves the orientation of all horizontal lines, but exchanges the lower and the upper half-planes. In
summary, the relative position (left and right) of $C_t$ and $S$ on the lower half-plane of the original phase portrait of
(\ref{eqsphasexy}), is the same as
on the upper half-plane on the phase portrait after all the blow-ups. 

Therefore we can deduce that for $t \in (-1,0)$ the curve $C_t$ approaches the infinity at the asymptote from the left-hand side of the
separatrix $S$, and for $t \in [0,+\infty)$ it approaches from the right. Given that this component of the $C_t$ curve has always points at
the left-hand side of $S$, we can deduce that $C_t$ intersects $S$ at least at one point (other than the infinity) for $t\geq 0$.

Furthermore, this finite intersection point depends continuously on $t$, so a small perturbation on $t$ will still make the two curves
intersect. Thus, there exist a small $\delta<0$ such that for $t>\delta$ the function $C_t$ changes sign along the separatrix $S$. Actually,
for $\delta < t < 0$ the sign must change at least twice.
\vspace{1em}

Finally, we show that this change of sign of $C_t$ does not happen for some $t$ close enough to $-1$. This is due to the fact that for such $t$
the curve $\{C_t=0\}$ is a barrier for the separatrix. We compute the normal vector to the curve $C_t$ in (\ref{Cxyori}),
$$\grad C_t = \left( 2y - 2x+ (t+1) y^2 (-2y+4x) , 2x + 2(t+1) y (-3xy+2x^2-1)\right)$$
and compare with the vector field of the system (\ref{eqsphasexy})
$$V=\left( xy -2 x^2 + \frac{1}{2} , 2xy -2 x^2 + \frac{1}{2} \right) .$$
Their scalar product is
 \begin{align*}
  & \langle \grad C_t , V \rangle  = \\  & = -y\left(-2xy+2x^2-1 + (t+1)(2xy^3+4x^2y^2+y^2-12x^3y+5xy+8x^4-6x^2+1)\right)
\end{align*}
Restricted to the curve $\{C_t=0\}$, this simplifies by substracting the equation $yC_t=0$,
\begin{equation}
   \psi_t(x,y):=\langle \grad C_t , V \rangle\big|_{C_t}   = -y\left( x^2 + (t+1) (6x^2y^2-12x^3y+5xy+8x^4-6x^2+1) \right) \label{prxy}
\end{equation}
Fortunately $C_t=0$ is a second order equation for $x$, thus it is easy to select the apropriate isolation $x=x(y)$ corresponding to the
branch on $x>0$, $y<0$, 
$$x=x(y)={\frac {-y+ \left( t+1 \right) {y}^{3}+\sqrt {{y}^{2}-2\, \left( t+1
 \right) {y}^{4}+ \left( t+1 \right) ^{2}{y}^{6}-3\, \left( t+1
 \right) {y}^{2}+2\, \left( t+1 \right) ^{2}{y}^{4}+1}}{2\, \left( t+1
 \right) {y}^{2}-1}} ,$$
and substitute it on (\ref{prxy}) (although unfortunately, the explicit expression is quite ugly),
$$ \Psi_t(y):=\psi_t(x(y),y) = \ldots $$
This is just a real function $\Psi_t:\left(-\infty, \frac{-1}{\sqrt{(t+1)}}\right] \longrightarrow \mathbb R$ (the upper bound on
the domain is the negative solution of $C_t(0,y)=0$). This function gives for each value of $y$ the scalar product between the normal
vector to the $C_t$ curve (pointing rightwards) and the vector field of the system at the point $\left(x(y),y\right)$. If this function is strictly positive for values of $t$ close to $-1$, this implies that $C_t$ is a barrier for the separatrix. For, the
separatrix $S$ emanates from $(1/2,0)$, which is on the right hand side of $C_t$, and if $S$ touched $C_t$, then its tangent vector (the vector
field of the system) would be pointing to the same region separated by $C_t$. Otherwise, if the function $\Psi_t$ fails to be
positive, then $C_t$ fails to be a barrier. As shown in Figure \ref{imgisbarrier}, $C_t$ is a barrier for $t=-0.7$ but it is not for $t=-0.2$. 
\begin{figure}[h]
\subfigure[$t=-0.7$]{
  \includegraphics[width=0.3\textwidth]{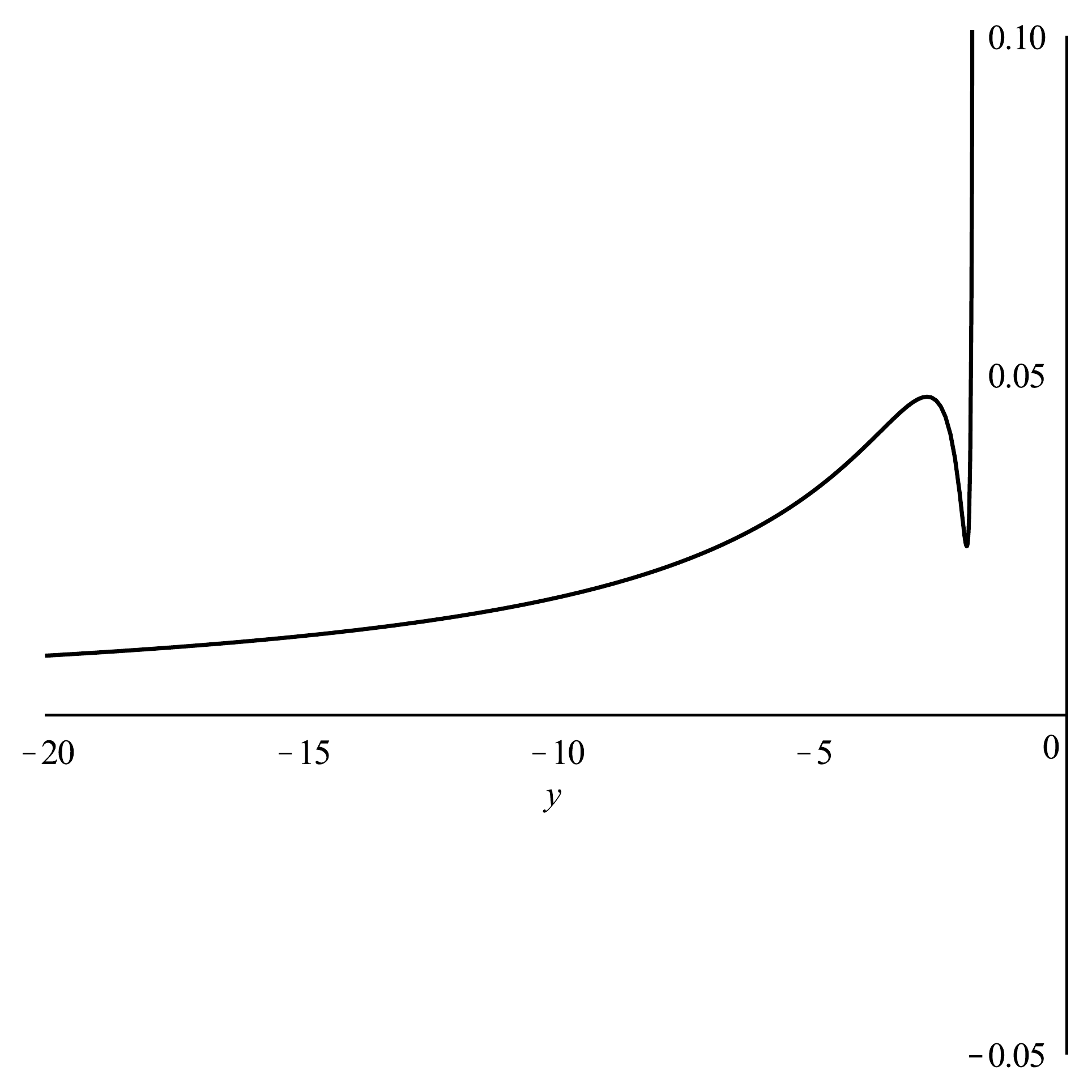} }
\subfigure[$t=-0.2$]{
  \includegraphics[width=0.3\textwidth]{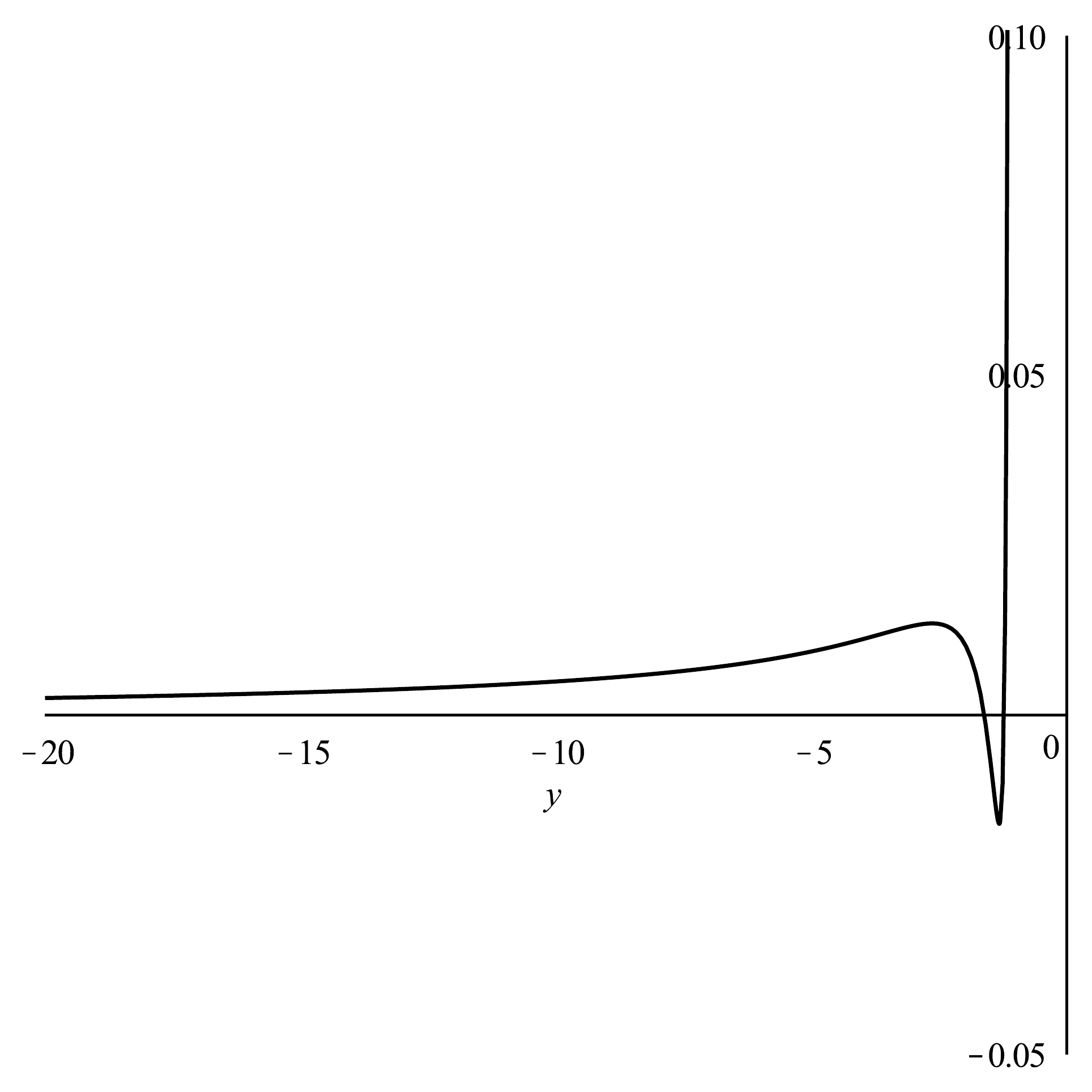} }
\caption{Graph of $\Psi_t(y)$ for two values of $t$.}
\label{imgisbarrier}
\end{figure}
The given value of $-0.7$ is just an example, it could be checked for smaller values. However, it is not immediate to tell which is the critical value, since the failure of $C_t$ to be a barrier does not ensure an actual crossing of $C_t$ and the separatrix $S$.

We can therefore say that the scalar curvature is always negative, and that (for instance) for $t=-0.7$ the soliton is evolving with the
scalar curvature everywhere increasing, whereas for $t \geq 0$ the soliton has regions where the scalar curvature is increasing and regions
where it is decreasing.  Any fixed point eventually belongs to the region of increasing curvature and therefore any point eventually tends to
zero curvature, due to the dominance of the expanding effect.
\end{proof}

\begin{figure}[p]
\newlength{\WID}
\setlength{\WID}{0.25\textwidth}

\subfigure[The separatrix $S$ and the curve $C_{10}$.]{ 
  \includegraphics[width=0.32\textwidth]{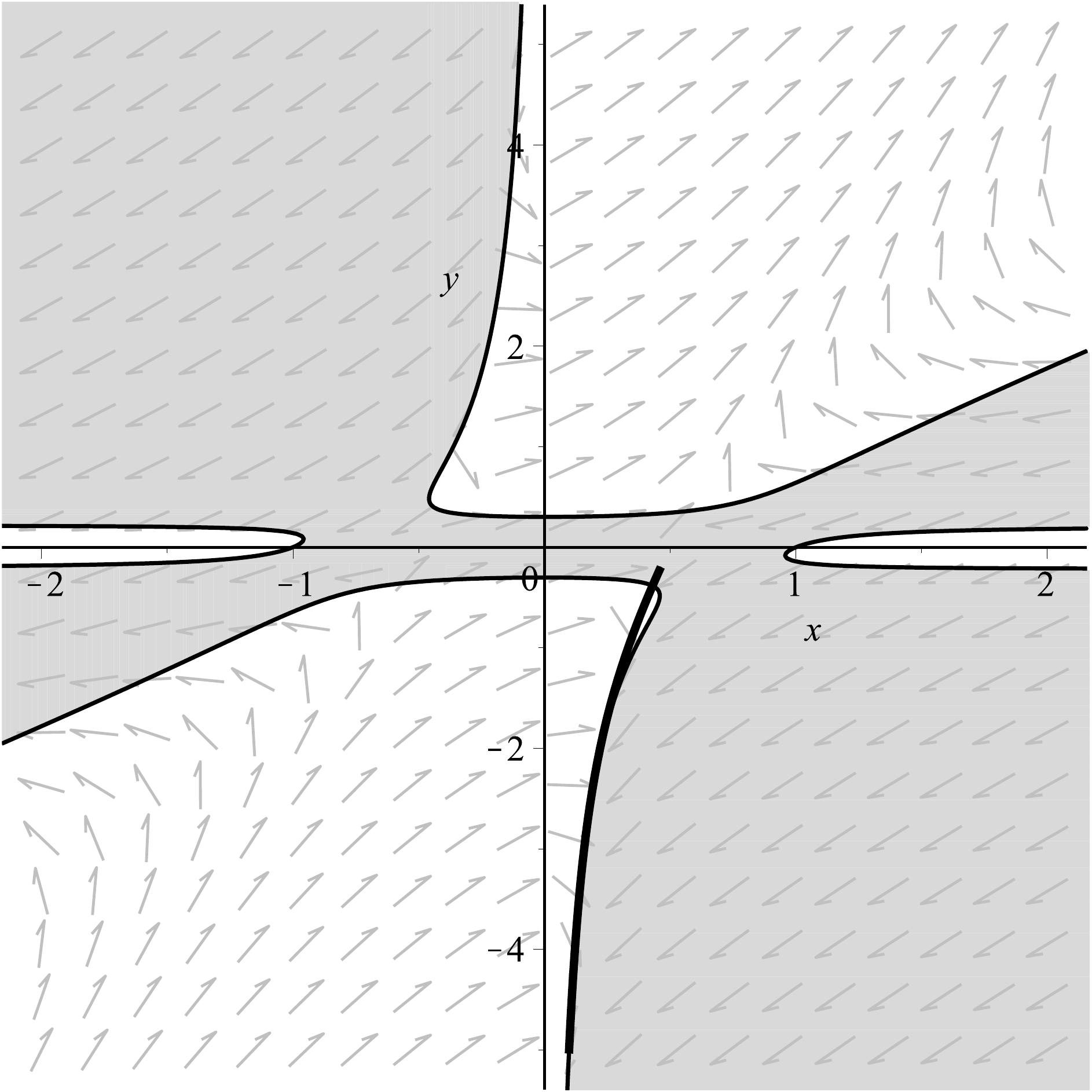}  \label{imgblows_a} }
\subfigure[The point of contact at infinity brought to the origin.]{
  \includegraphics[width=0.32\textwidth]{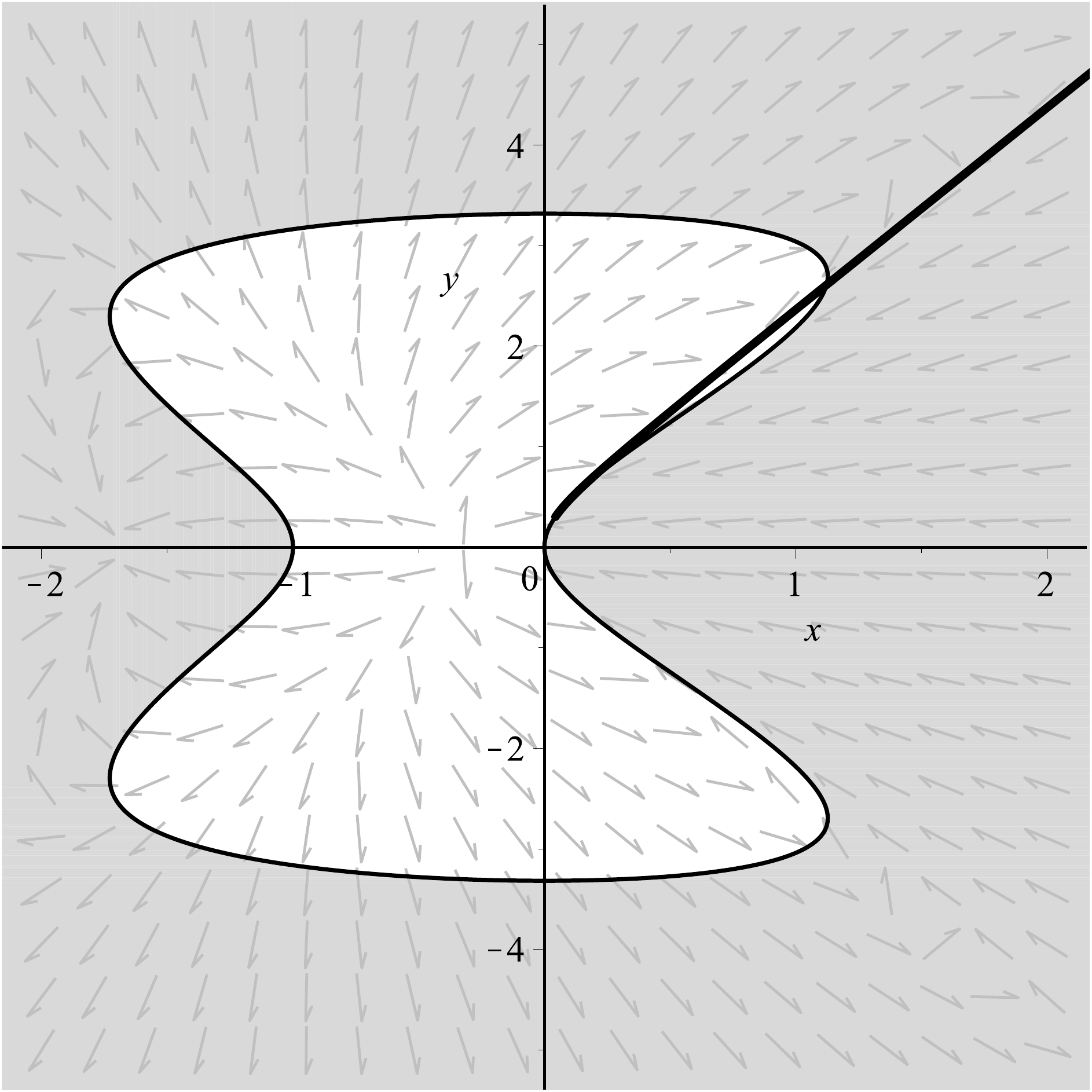} \label{imgblows_b} }

\subfigure[First blow-up.]{
\includegraphics[width=\WID]{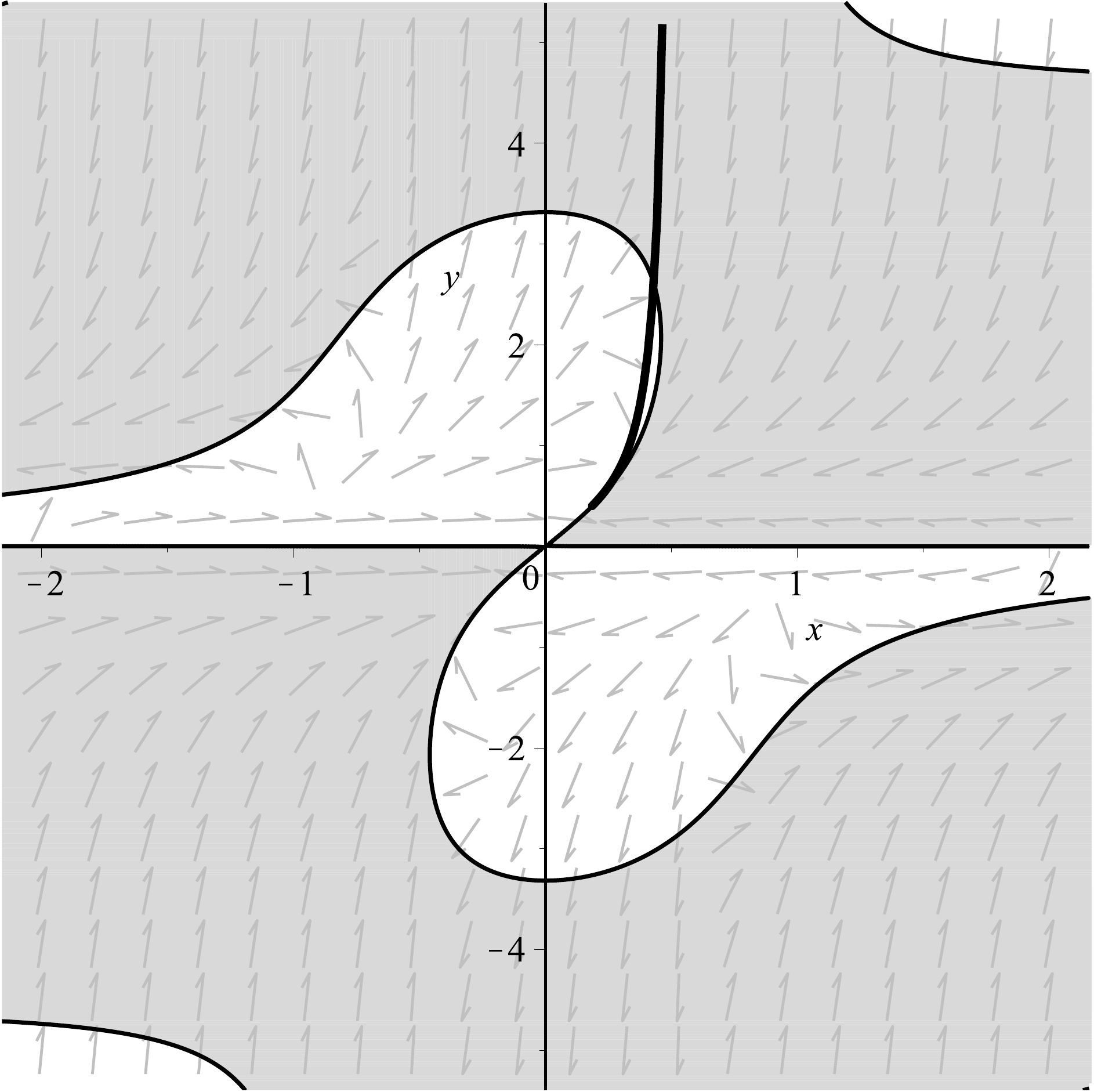} }
\subfigure[Second blow-up.]{
\includegraphics[width=\WID]{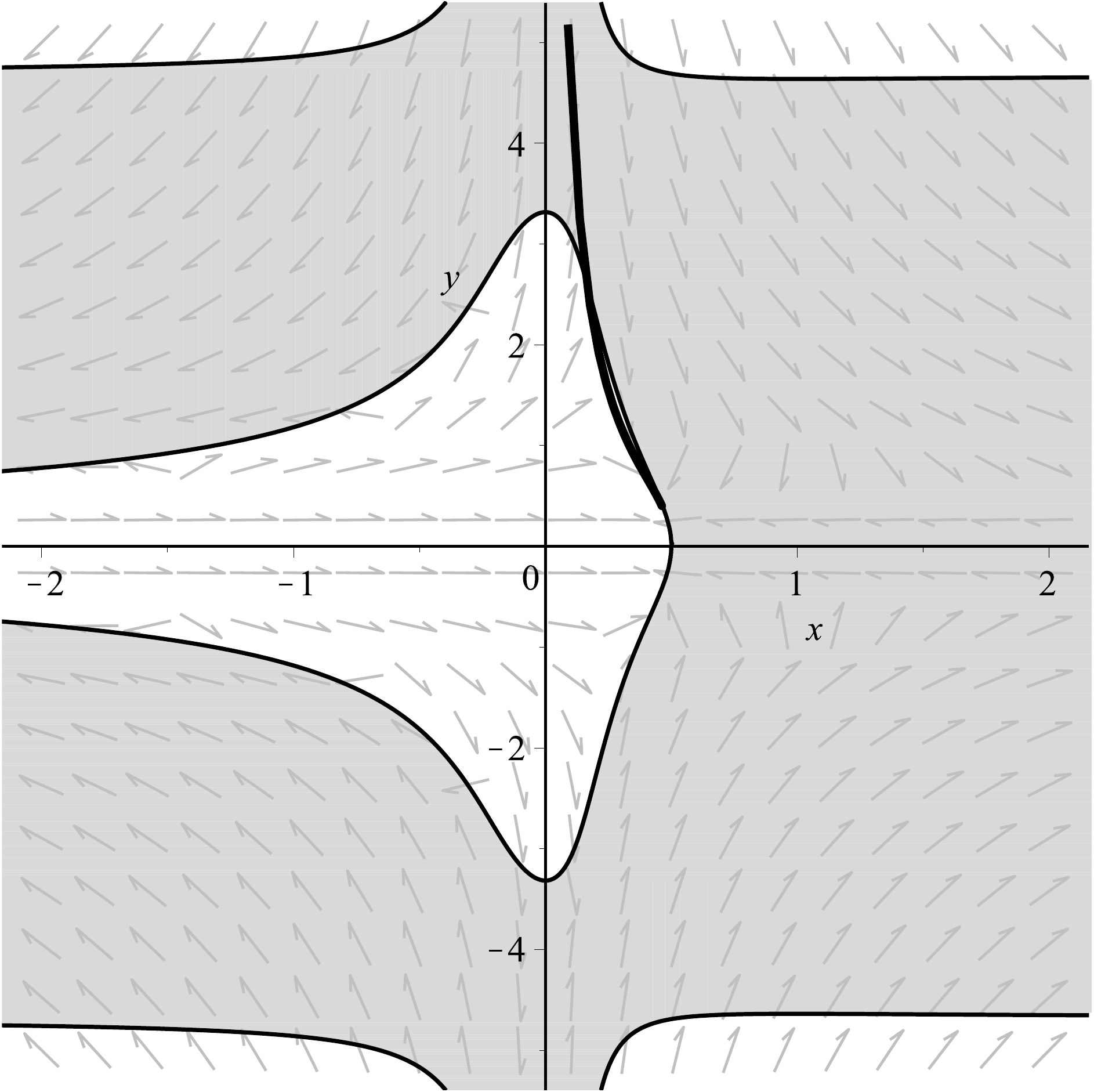} }
\subfigure[Translation.]{
\includegraphics[width=\WID]{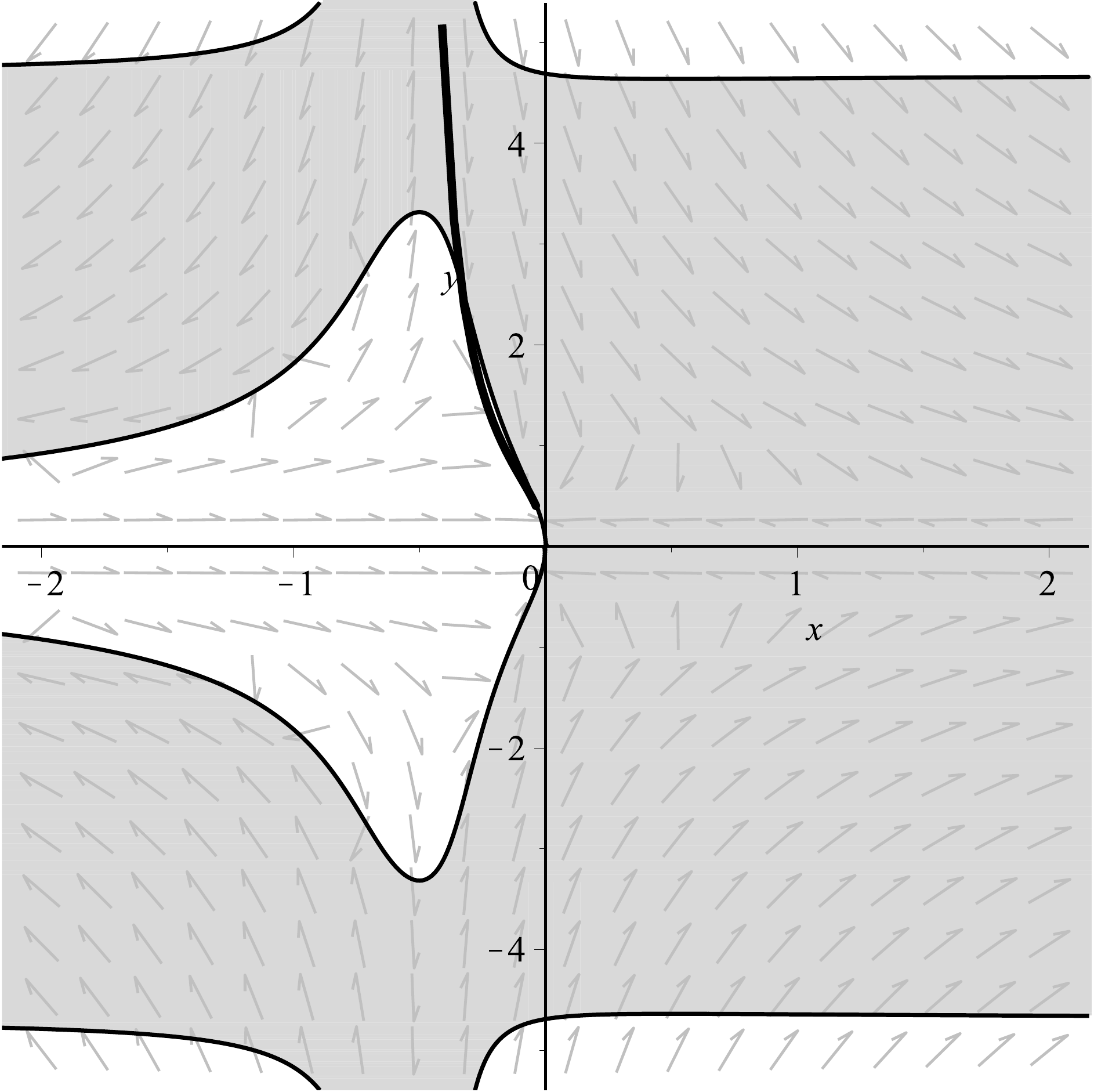} }
\subfigure[Third blow-up.]{
\includegraphics[width=\WID]{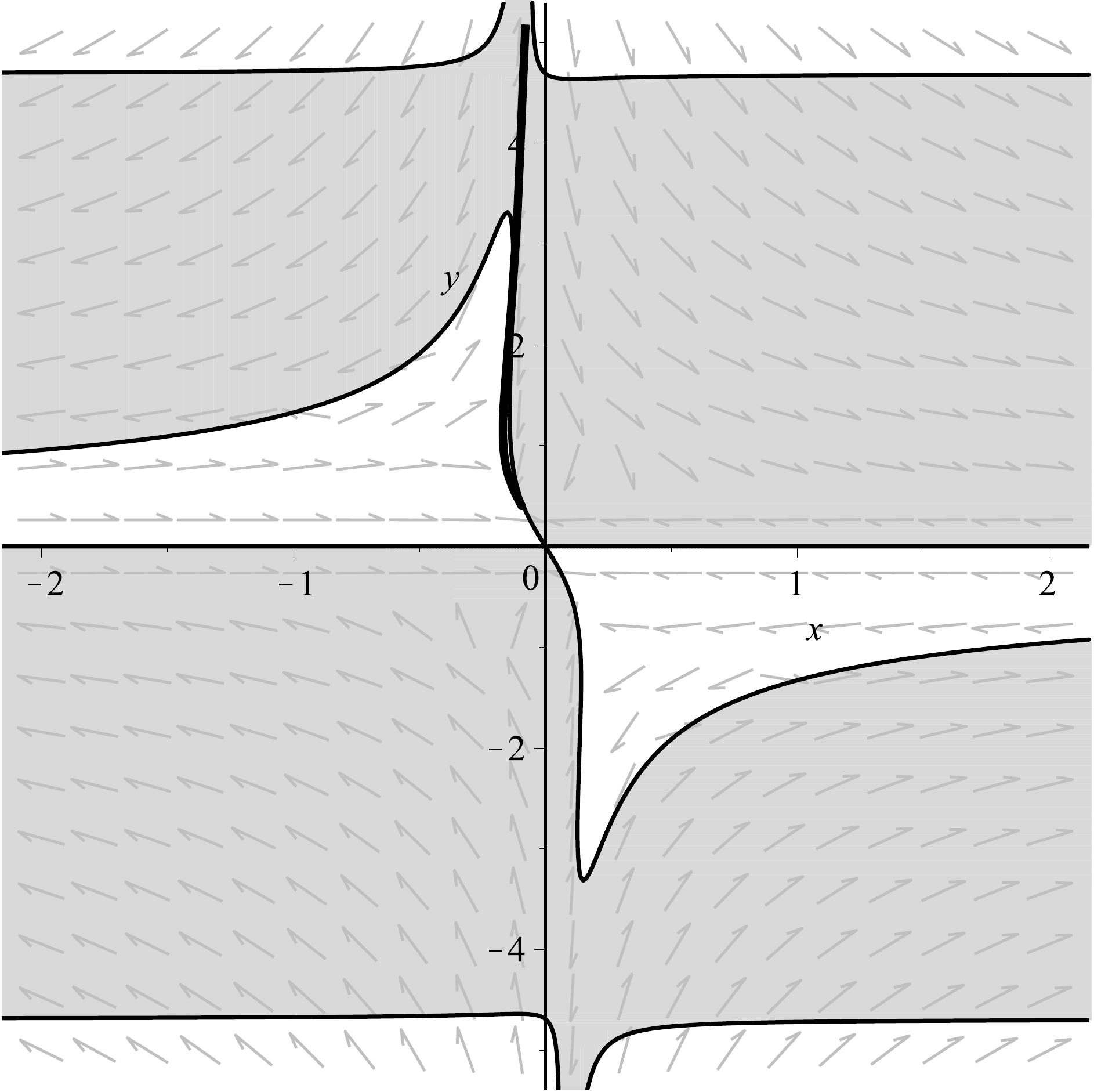} }
\subfigure[Fourth blow-up.]{
\includegraphics[width=\WID]{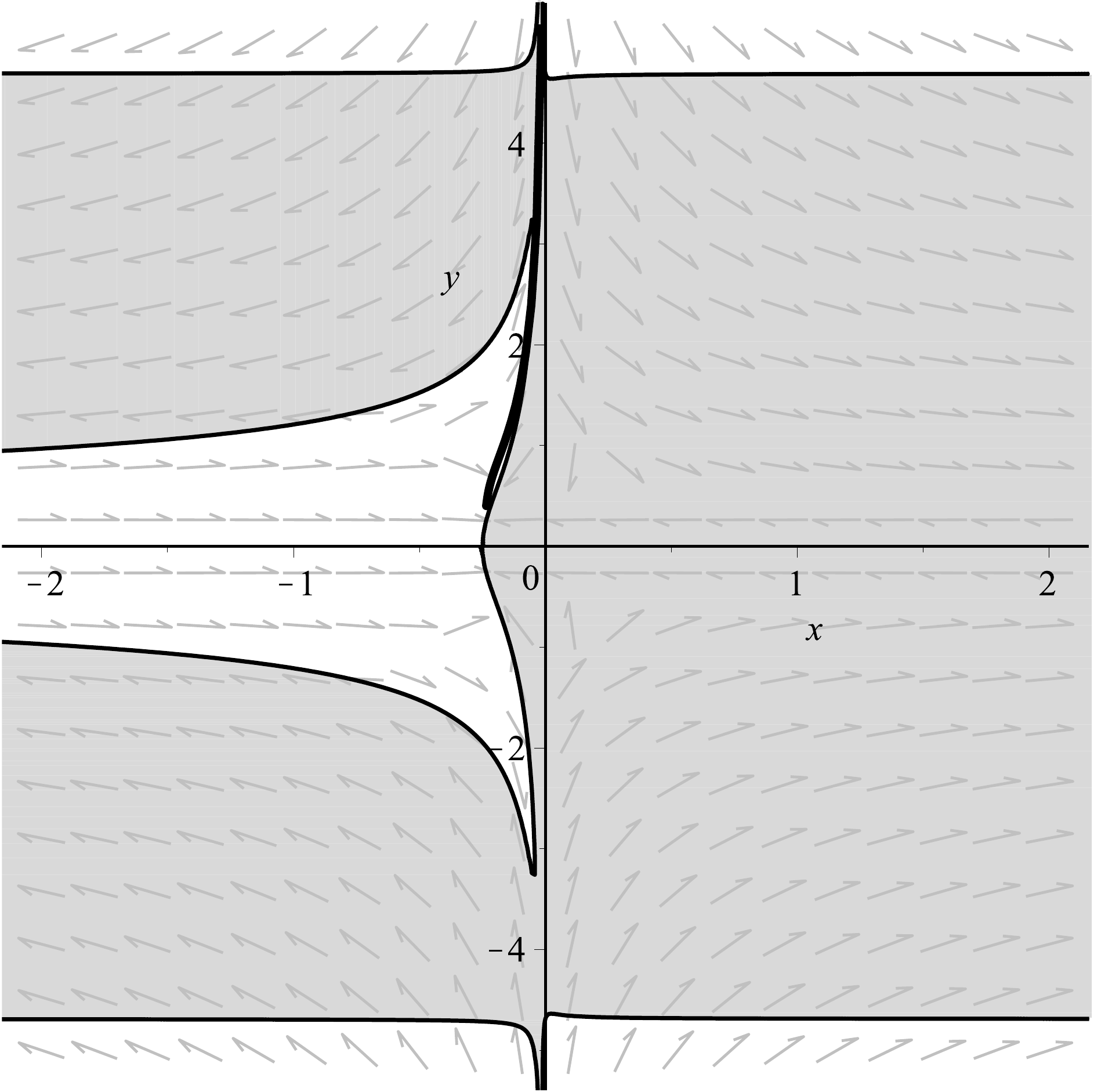} }
\subfigure[Translation.]{
\includegraphics[width=\WID]{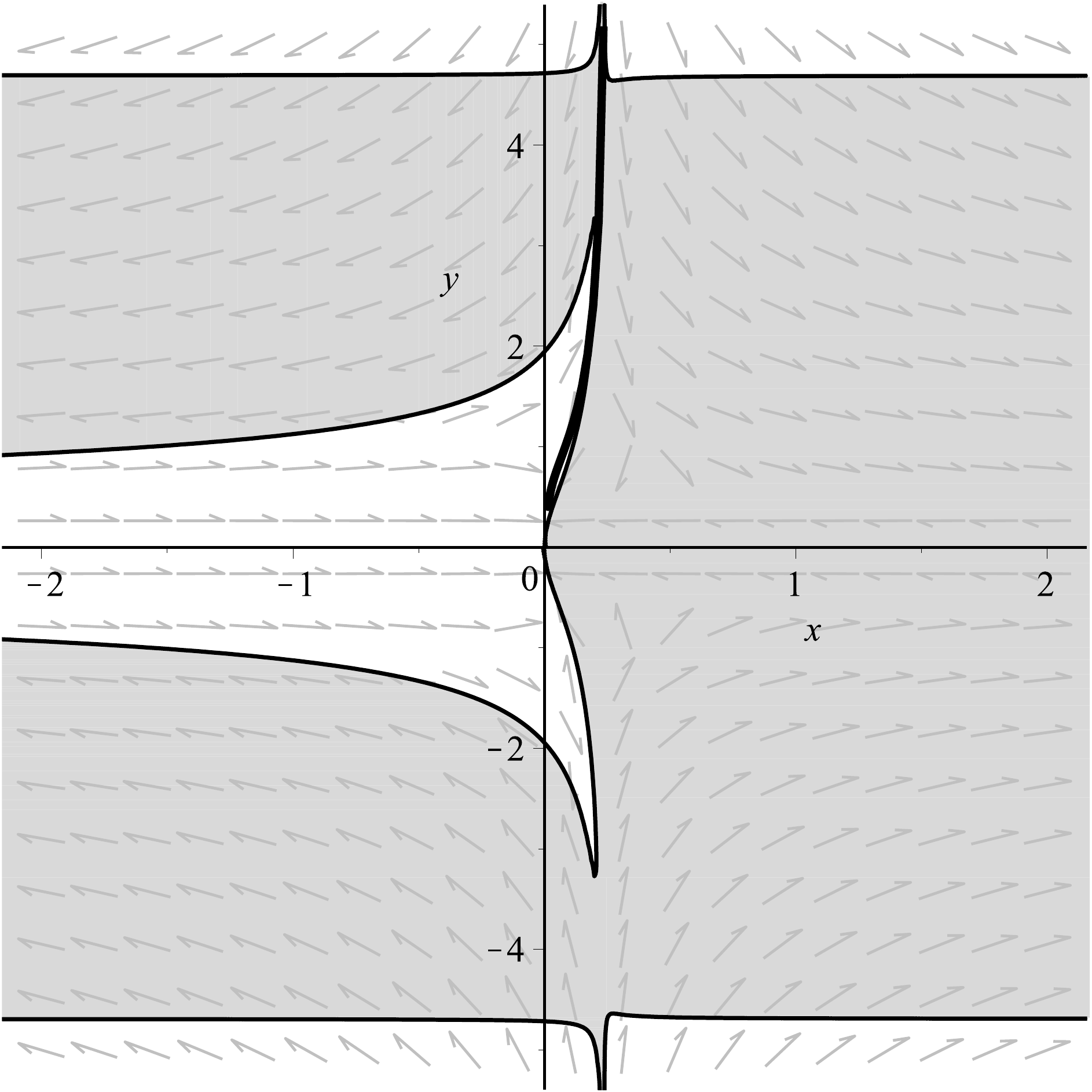} }
\subfigure[Fifth blow-up.]{
\includegraphics[width=\WID]{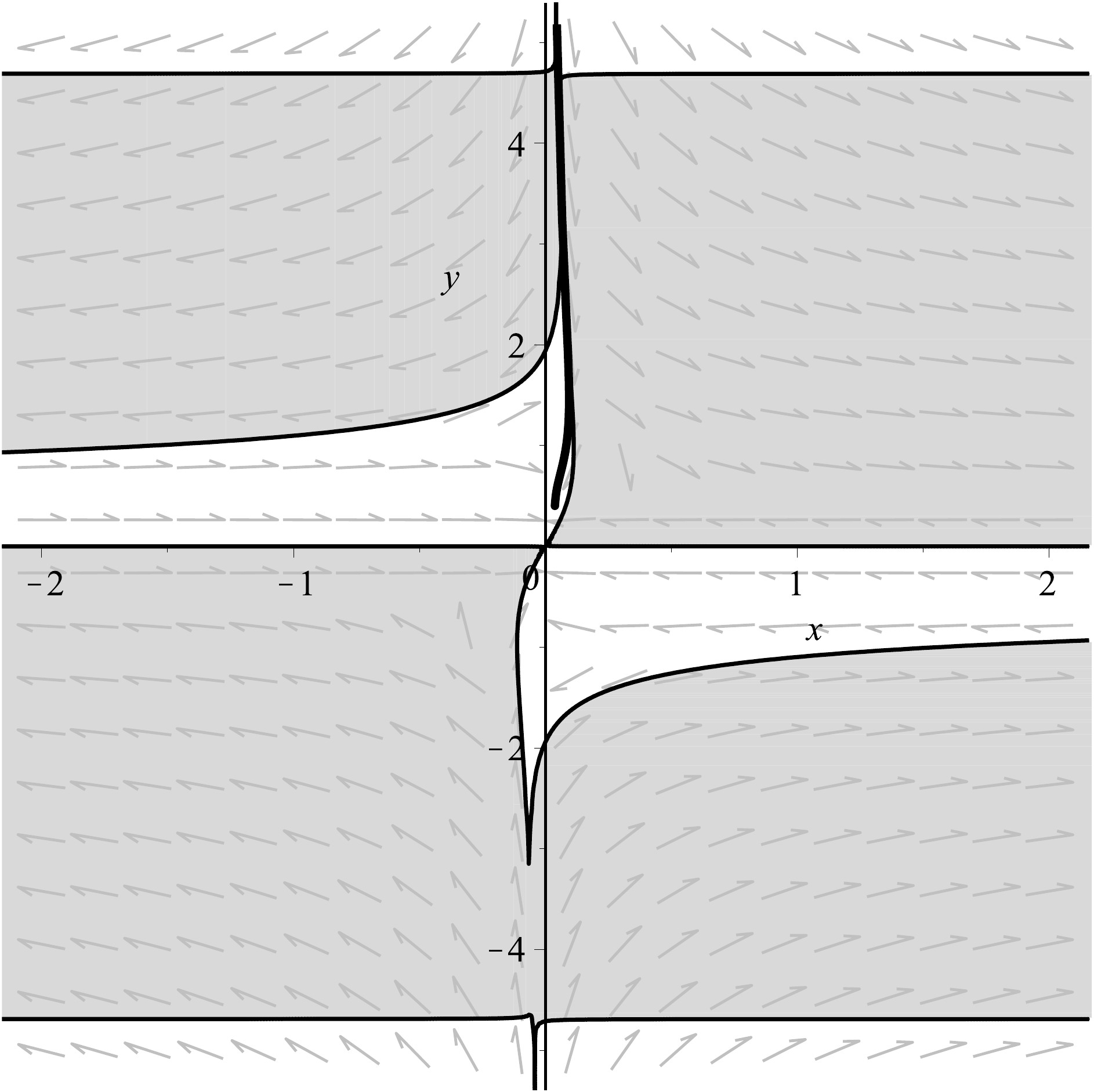} }
\subfigure[Sixth blow-up.]{
\includegraphics[width=\WID]{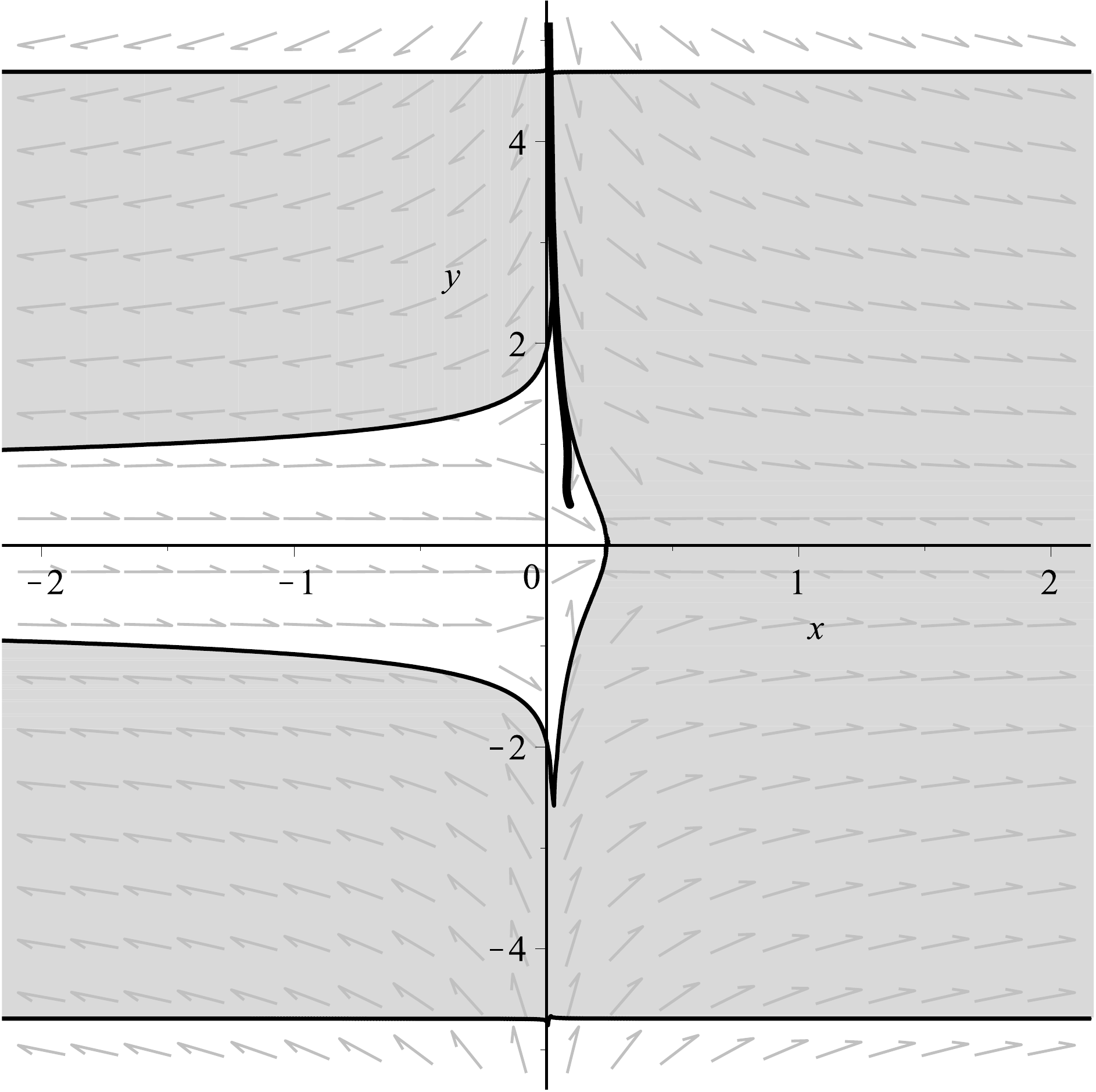} }
\subfigure[Translation.]{
\includegraphics[width=\WID]{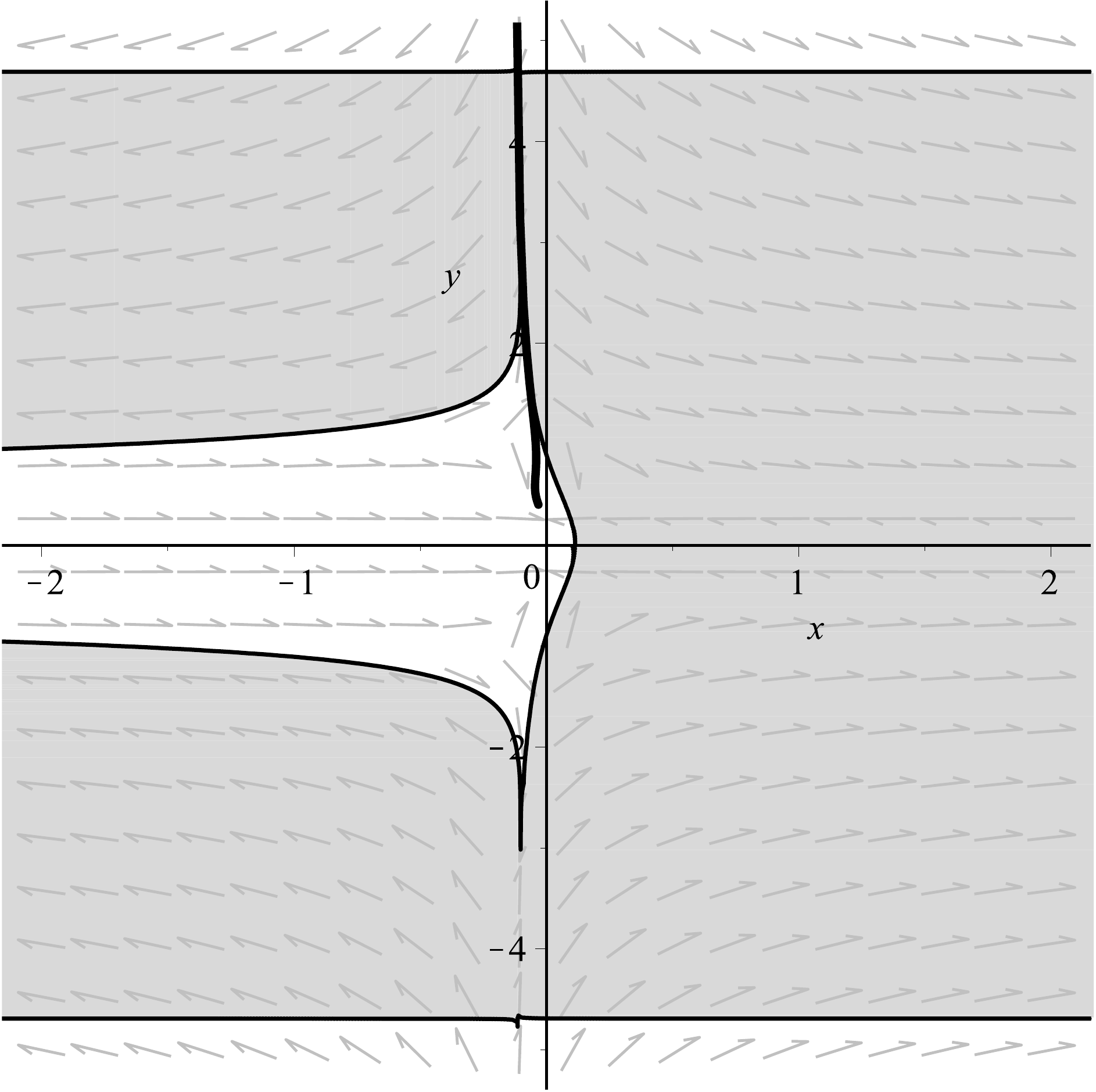}}

\caption{Analysis of the tangency at infinity of the separatrix $S$ (bold line) and the curve $\{C_t=0\}$ with $t=10$. Shaded areas
correspond to
$C_t>0$.} \label{imgblows}
\end{figure}

\newpage
\bibliographystyle{amsplain}
\bibliography{biblio}

\providecommand{\bysame}{\leavevmode\hbox to3em{\hrulefill}\thinspace}
\providecommand{\MR}{\relax\ifhmode\unskip\space\fi MR }
\providecommand{\MRhref}[2]{%
  \href{http://www.ams.org/mathscinet-getitem?mr=#1}{#2}
}
\providecommand{\href}[2]{#2}
\begin{thebibliography}{10}

\bibitem{progP4}
\emph{P4: {P}olynomial {P}lanar {P}hase {P}ortraits}, a program available at
  \url{http://mat.uab.cat/~artes/p4/p4.htm}.

\bibitem{Caoetal}
Huai-Dong Cao, Giovanni Catino, Quiang Chen, Carlo Mantegazza, and Lorenzo
  Mazzieri, \emph{Bach-flat gradient steady ricci solitons}, Preprint (2011),
  arXiv:1107.4591 [math.DG].

\bibitem{ChenZhu}
Bing-Long Chen and Xi-Ping Zhu, \emph{Uniqueness of the {R}icci flow on
  complete noncompact manifolds}, J. Differential Geom. \textbf{74} (2006),
  no.~1, 119--154. \MR{2260930 (2007i:53071)}

\bibitem{TRFTAA1}
Bennett Chow, Sun-Chin Chu, David Glickenstein, Christine Guenther, James
  Isenberg, Tom Ivey, Dan Knopf, Peng Lu, Feng Luo, and Lei Ni, \emph{The
  {R}icci flow: techniques and applications. {P}art {I}}, Mathematical Surveys
  and Monographs, vol. 135, American Mathematical Society, Providence, RI,
  2007, Geometric aspects. \MR{2302600 (2008f:53088)}

\bibitem{TRFTAA2}
\bysame, \emph{The {R}icci flow: techniques and applications. {P}art {II}},
  Mathematical Surveys and Monographs, vol. 144, American Mathematical Society,
  Providence, RI, 2008, Analytic aspects. \MR{2365237 (2008j:53114)}

\bibitem{DeTur}
Dennis~M. DeTurck, \emph{Deforming metrics in the direction of their {R}icci
  tensors}, J. Differential Geom. \textbf{18} (1983), no.~1, 157--162.
  \MR{697987 (85j:53050)}

\bibitem{DumLliArt}
Freddy Dumortier, Jaume Llibre, and Joan~C. Art{\'e}s, \emph{Qualitative theory
  of planar differential systems}, Universitext, Springer-Verlag, Berlin, 2006.
  \MR{2256001 (2007f:34001)}

\bibitem{Ham2}
Richard~S. Hamilton, \emph{Three-manifolds with positive {R}icci curvature}, J.
  Differential Geom. \textbf{17} (1982), no.~2, 255--306. \MR{664497
  (84a:53050)}

\bibitem{Ham1}
\bysame, \emph{The {R}icci flow on surfaces}, Mathematics and general
  relativity ({S}anta {C}ruz, {CA}, 1986), Contemp. Math., vol.~71, Amer. Math.
  Soc., Providence, RI, 1988, pp.~237--262. \MR{954419 (89i:53029)}

\bibitem{Ham3}
\bysame, \emph{The {H}arnack estimate for the {R}icci flow}, J. Differential
  Geom. \textbf{37} (1993), no.~1, 225--243. \MR{1198607 (93k:58052)}

\bibitem{Ivey}
Thomas Ivey, \emph{Ricci solitons on compact three-manifolds}, Differential
  Geom. Appl. \textbf{3} (1993), no.~4, 301--307. \MR{1249376 (94j:53048)}

\bibitem{Mil}
J.~Milnor, \emph{Morse theory}, Based on lecture notes by M. Spivak and R.
  Wells. Annals of Mathematics Studies, No. 51, Princeton University Press,
  Princeton, N.J., 1963. \MR{0163331 (29 \#634)}

\bibitem{Per}
Grisha Perelman, \emph{The entropy formula for the ricci flow and its geometric
  applications}, arXiv:math/0211159 [math.DG].

\bibitem{Ram}
Daniel Ramos, \emph{Smoothening cone points with ricci flow}, Preprint (2011),
  to appear in Bulletin de la Soci{\'e}t{\'e} Math{\'e}matique de France,
  arXiv:1107.4591 [math.DG].

\bibitem{Shi}
Wan-Xiong Shi, \emph{Deforming the metric on complete {R}iemannian manifolds},
  J. Differential Geom. \textbf{30} (1989), no.~1, 223--301. \MR{1001277
  (90i:58202)}

\bibitem{Top1}
Peter~M. Topping, \emph{Uniqueness and nonuniqueness for {R}icci flow on
  surfaces: reverse cusp singularities}, Int. Math. Res. Not. IMRN (2012),
  no.~10, 2356--2376. \MR{2923169}

\end{thebibliography}

 \end{document}